\newtheorem{theorem}{Theorem}[section]
\newtheorem{lemma}[theorem]{Lemma}
\newtheorem{definition}[theorem]{Definition}
\newtheorem{proposition}[theorem]{Proposition}
\newtheorem{example}[theorem]{Example}
\newtheorem{corollary}[theorem]{Corollary}
\def\<{\langle}
\def\>{\rangle}
\def\a{\alpha}
\def\B{\Bbbk}
\def\b{\beta}
\def\bo{\bowtie}
\def\bu{\bullet}
\def\c{\cdot}
\def\d{\delta}
\def\D{\Delta}
\def\f{\forall}
\def\g{\gamma}
\def\ld{\lambda}
\def\lr{\longrightarrow}
\def\lh{\leftharpoonup }
\def\m{\mapsto}
\def\o{\otimes}
\def\r{\rho}
\def\rh{\rightharpoonup }
\def\si{\sigma}
\def\ti{\times}
\def\tr{\triangleright}
\def\tl{\triangleleft}
\def\t{\tau}
\def\v{\varepsilon}
\def\va{\varsigma}
\def\z{\zeta}
\date{}
\begin{document}
\renewcommand{\baselinestretch}{1.2}
\renewcommand{\arraystretch}{1.0}
\title{\bf The Drinfel'd codouble constuction for monoidal Hom-Hopf algebra}
 \date{}
\author {{\bf Dongdong Yan\footnote {Corresponding author:  Ydd150365@163.com} \quad  Shuanhong Wang}\\
{\small Department of Mathematics, Southeast University}\\
{\small Nanjing, Jiangsu 210096, P. R. of China}}
 \maketitle
\begin{center}
\begin{minipage}{12.cm}

\noindent{\bf Abstract.} Let $(H, \b)$ be a monoidal Hom-Hopf algebra with the bijective antipode $S$, In this paper, we mainly construct the Drinfel'd codouble $T(H)=(H^{op}\o H^{*}, \b\o \b^{*-1})$ and $\widehat{T(H)}=( H^{*}\o H^{op}, \b^{*-1}\o \b)$ in the setting of monoidal Hom-Hopf algebras. Then we prove both $T(H)$ and $\widehat{T(H)}$ are coquasitriangular. Finally, we discuss the relation between Drinfel'd codouble and Heisenberg double in the setting of monoidal Hom-Hopf algebras, which is a generalization of the part results in \cite{L94}.
\\

\noindent{\bf Keywords:} Monoidal Hom-Hopf algebra; Majid's bicrossproduct; Double crosscoproduct; Drinfel'd codouble; Heisenberg double.
\\

 \noindent{\bf  Mathematics Subject Classification:} 16W30, 16T05, 81R50.
 \end{minipage}
 \end{center}
 \normalsize\vskip1cm

\section*{Introduction}
One of the most celebrated Hopf constructions is the quantum double of Drinfel'd \cite{D90}, which associates to a Hopf algebra $A$ and a quasitriangular Hopf algebra $D(A)$. Unlike the Hopf algebra axioms themselves, the axioms of a dual quasitriangular Hopf algebra (coquasitriangular Hopf algebras) are not self-dual. Thus the axioms and ways of working with these coquasitriangular Hopf algebras look
somewhat different in fact and so it is surely worthwhile to write them out explicity in this dual form and to study them. On the other hand, the corepresentation category of coquasitriangular Hopf algebras can induce a braided monoidal category which is different from one coming from the representation category of quasitriangular Hopf algebras. It is these ideals which many authors studied these notions (see \cite{L94,R93,W99, W01, W07}).

Hom-structures (Lie algebras, algebras, coalgebras, and Hopf algebras) have been intensively investigated in the literature recently (see \cite{CWZ13,HLS06, H99}), In \cite{CG11}, Caenepeel and Goyvaerts illustrated from the point of view of monoidal categories and introduced monoidal Hom-Hopf algebras. Soon afterwards, many of classical results in Hopf algebra theory can be generalized to the monoidal Hom-Hopf algebras (see \cite{LS14,NLZ19}).

In \cite{LW16}, Lu and Wang developed the quasitriangular structure of the Drinfel'd double on Hom-Hopf algebras. In \cite{ZGW19}, the authors discussed the coquasitriangular structure of the Drinfel'd codouble on Hom-Hopf algebras. Motivated by this observation, we aim to find new coquasitriangular structure in the setting of monoidal Hom-Hopf algebras in this paper. This is achieved by generalizing an existing the Drinfel'd codouble (see \cite{M93}). We will generalize this construction to the monoidal Hom-Hopf algebras. We find a suitable generalization of the notions of Majid's bicrossproduct for this setting and obtain a Drinfel'd codouble of a monoidal Hom-Hopf algebra whose corepresentation is a braided monoidal category. What's more, we establish the relation between Drinfel'd codouble and Heisenberg double in the setting of monoidal Hom-Hopf algebras, generalizing the main result in \cite{L94}.

This paper is organized as follows. In section 1, we will recall some of basic definitions and results of monoidal Hom-Hopf algebras, such as
Hom-category, monoidal Hom-Hopf algebras, and their modules and comodules. Let $(A, \a)$ and $(H, \b)$ be two monoidal Hom-Hopf algebras. In section 2, we will introduce the definition of an another bicrossproduct $(A\bo H, \a\o \b)$ (called right bicrossproduct in this paper) which is slightly different from one in \cite{NLZ19}, and the sufficient and necessary conditions for the right bicrossproduct to be a monoidal Hom-Hopf algebra are given, generalizing the structure of the Majid's bicrossproduct defined in \cite{M90}. In section 3, we will give a class of right bicrossproduct monoidal Hom-Hopf algebras.

In section 4, we will introduce the notion of the matched copair and the double crosscoproduct $(A\o H, \a\o \b)$ for monoidal Hom-Hopf algebras $(A, \a)$ and $(H, \b)$, and prove the matched copair is the sufficient condition for the double crosscoproduct to form a monoidal Hom-Hopf algebra, which generalizes the structure of double crosscoproduct in the setting of Hopf algebra (see \cite{CDMP97}). Finally, we obtain the structure of Drinfel'd codouble $T(H)=(H^{op}\o H^{*}, \b\o \b^{*-1})$ and $\widehat{T(H)}=( H^{*}\o H^{op}, \b^{*-1}\o \b)$ by our new approach. In section 5, we will get the Drinfel'd codouble constructed in previous section has a coquasitriangular structure, and study the relation between Drinfel'd codouble and Heisenberg double motivated by the research in \cite{L94}.
\section{Preliminaries}
\def\theequation{1.\arabic{equation}}
\setcounter{equation} {0}
Throughout this paper, all vector spaces, tensor products, and morphisms are over a fixed field $\B$. We denote the identity map by $id$. For a coalgebra $C$, we shall use the Sweedler's notation $\D(c)=c_{1}\o c_{2}$, for any $c\in C$. We always omit the summation symbols for convenience.

Let $\mathcal{M}_{\B}=(\mathcal{M}_{\B},\o, \B, a, l, r)$ be the monoidal category of $\B$-modules. There is a new monoidal category $\mathcal{H}(\mathcal{M}_{\B})=(\mathcal{H}(\mathcal{M}_{\B}),\o,(\B,id_{\B}),  a, l, r)$: the objects are couple $(M,\mu)$, where $M\in \mathcal{M}_{\B}$ and $\mu\in Aut_{\B}(M)$, and a morphism $f: (M, \mu)\lr (N,\nu)$ is a morphism $f: M\lr N$ such that $\nu\circ f=f\circ \mu$. For any objects $(M,\mu)$ and $(N,\nu)$, the monoidal structure is given by
\begin{align*}
(M, \mu)\o (N,\nu)=(M\o N,\mu\o\nu).
\end{align*}
Generally speaking, all Hom-structures are objects in the monoidal category $\widetilde{\mathcal{H}}(\mathcal{M}_{\B})=(\mathcal{H}(\mathcal{M}_{\B}),\o,(\B,id_{\B}),\widetilde{a},\widetilde{l},\widetilde{r})$ (see \cite{CG11}), where the associativity constraint $\widetilde{a}$ and the unit constraints $\widetilde{l}$ and $\widetilde{r}$ are given by the formula
\begin{align*}
\widetilde{a}_{M,N,L}&=a_{M,N,L}\circ ((\mu\o id_{N})\o \va^{-1} )=(\mu\o (id_{N}\o \va^{-1}))\circ a_{M,N,L},
\\ \widetilde{l}_{M }&=\mu\circ l_{M}=l_{M}\circ (id\o \mu),\quad \widetilde{r}_{M }=\mu\circ r_{M}=r_{M}\circ (\mu\o id),
\end{align*}
for any $(M,\mu), (N,\nu), (L,\va)\in \widetilde{\mathcal{H}}(\mathcal{M}_{\B})$. The category $\widetilde{\mathcal{H}}(\mathcal{M}_{\B})$ is called the Hom-category associated to the monoidal category $\mathcal{M}_{\B}$.

In what follows, we will recall from \cite{CG11} some information about Hom-structures.

A unital monoidal Hom-associative algebra $(A,\a)$ is an object in the Hom-category $\widetilde{\mathcal{H}}(\mathcal{M}_{\B})$, with an element $1_{A}\in A$ and a linear map $m_{A}:A\o A\lr A, a\o b\m ab$ such that for any $a,b,c\in A$,
\begin{align*}
\a(a)(bc)&=(ab)\a(c), \quad a 1_{A}=1_{A} a=\a(a),
\\\a(ab)&=\a(a)\a(b),\quad \a(1_{A})=1_{A}.
\end{align*}
In this paper, the algebras we mainly discussed are this kind of unital monoidal Hom-associative algebras, and in the following we call them the monoidal Hom-algebras without any confusion. Let $(A,\a)$ and $(A',\a')$ be two monoidal Hom-algebras. A monoidal Hom-algebra map $f:(A,\a)\lr (A',\a')$ is a map $f:A\lr A'$ such that for any $a,b\in A$, $\a'\circ f=f\circ \a$, $f(ab)=f(a)f(b)$ and $f(1_{A})=1_{A'}$.

A counital monoidal Hom-coassociative coalgebra $(C,\ld)$ is an object in the Hom-category $\widetilde{\mathcal{H}}(\mathcal{M}_{\B})$, with linear maps $\D_{C}:C\lr C\o C, c\m c_{1}\o c_{2}$ and $\v:C\lr \B$ such that for any $c\in C$,
\begin{align*}
\ld^{-1}(c_{1})\o \D(c_{2})&=\D(c_{1})\o \ld^{-1}(c_{2}),\quad c_{1}\v(c_{2})=\v(c_{1})c_{2}=\ld^{-1}(c),
\\\D(\ld(c))&=\ld(c_{1})\o \ld(c_{2}),\quad \v(\ld(c))=\v(c).
\end{align*}
Analogue to monoidal Hom-algebras, monoidal Hom-coalgebras will be short for counital monoidal Hom-coassociative coalgebras without any confusion. Let $(C,\ld)$ and $(C',\ld')$ be two monoidal Hom-coalgebras. A monoidal Hom-coalgebra map $f:(C,\ld)\lr (C',\ld')$ is a map $f:C\lr C'$ such that $\ld'\circ f=f\circ \ld$, $\D'\circ f=(f\o f)\circ \D$ and $\v'\circ f=\v$.

A monoidal Hom-bialgebra $H=(H,\b,m,1_{H},\D,\v)$is a bialgebra in the Hom-category $\widetilde{\mathcal{H}}(\mathcal{M}_{\B})$, this means that $H=(H,\b,m,1_{H} )$ is a monoidal Hom-algebra and $H=(H,\b, \D,\v)$ is a monoidal Hom-coalgebra such that $\D,\v$ are monoidal Hom-algebra maps, i.e. for any $h,k\in H$,
\begin{align*}
\D(hk)&=\D(h)\D(k),\quad  \D(1_{H})=1_{H}\o 1_{H},
\\\v(hk)&=\v(h)\v(k),\quad \v(1_{H})=1_{\B}.
\end{align*}

A monoidal Hom-Hopf algebra is a monoidal Hom-bialgebra $(H,\b)$ with a morphism (called the antipode) $S:H\lr H$ in $\widetilde{\mathcal{H}}(\mathcal{M}_{\B})$ (i.e. $\b\circ S=S\circ \b$), such that for any $h\in H$,
\begin{align*}
S(h_{1})h _{2}=\v(h)1_{H}=h_{1}S(h_{2}).
\end{align*}

Next we will recall the actions and the coactions over monoidal Hom-algebras and monoidal Hom-coalgebras, respectively.

Let $(A,\a)$ be a monoidal Hom-algebra, A left $(A,\a)$-Hom-module is an object $(M,\mu)$ in $\widetilde{\mathcal{H}}(\mathcal{M}_{\B})$ with a morphism $\psi: A\o M\lr M, a\o m\m a\c m$ such that for any $a,b\in A$ and $m\in M$,
\begin{align*}
\a(a)\c (b\c m)&=(ab)\c \mu(m),\quad 1_{A}\c m=\mu(m),
\\\mu(a\c m)&=\a(a)\c\mu(m).
\end{align*}
Similarly, we can define the right $(A,\a)$-Hom-modules. Let $(M,\mu)$ and $(N,\nu)$ be two left $(A,\a)$-Hom-modules, a morphism $f:M\lr N$ is called left $(A,\a)$-linear if for any $a\in A$, $m\in M$, $f(a\c m)=a\c f(m)$ and $f\circ \mu=\nu\circ f$.

Let $(C,\ld)$ be a monoidal Hom-coalgebra. A right $(C,\ld)$-Hom-comodule is an object $(M,\mu)$ in $\widetilde{\mathcal{H}}(\mathcal{M}_{\B})$ with a linear map $\r_{r}:M\lr M\o C, m\m m_{(0)}\o m_{(1)}$ such that for any $m\in M$,
\begin{align*}
\mu^{-1}(m_{(0)})&\o \D(m_{(1)})=m_{(0)(0)}\o (m_{(0)(1)}\o \ld^{-1}(m_{(1)})),
\\  \r(\mu(m))&=\mu(m_{(0)})\o \ld(m_{(1)}),\quad m_{(0)}\v(m_{(1)})=\mu^{-1}(m).
\end{align*}
Similarly, we can define the left $(C,\ld)$-Hom-comodules. Let $(M,\mu)$ and $(N,\nu)$ be two right $(C,\ld)$-Hom-comodules, a morphism $g:M\lr N$ is called right $(C,\ld)$-colinear if for any $m\in M$, $g(m_{(0)})\o m_{(1)}=g(m)_{(0)}\o g(m)_{(1)}$ and $g\circ \mu=\nu\circ g$.

Let $(H,\b)$ be a monoidal Hom-bialgebra. A monoidal Hom-algebra $(A,\a)$ is called a left $(H,\b)$-Hom-module algebra (see \cite{CWZ13}) if $(A,\a)$ is a left $(H,\b)$-Hom-module together with the action $\psi$ such that for any $a,b \in A$ and $h \in H$,
\begin{align*}
h\c (ab) =(h_{1}\c a)(h_{2}\c b), \quad h\c 1_{A} =\v(h)1_{A}.
\end{align*}
\begin{example}\label{E1.1}
Let $(H,\b)$ be a monoidal Hom-bialgebra. The vector space $H^{*}$ becomes a monoidal Hom-algebra with unit $\v_{H}$, and its Hom-multiplication and structure map defined by:
\begin{align*}
&(u\bu v)(h)=u(h_{1})v(h_{2}),
\\&\d:H^{*}\lr H^{*}, \d(u)(h)=u(\b^{-1}(h)),
\end{align*}
for any $u,v \in H^{*}$ and $h\in H$, where $\d\in Aut (H^{*})$ is bijective and $\d^{-1}=\b^{*}$, the transpose of $\b$. Then for any $i\in \mathbb{Z}$, this monoidal Hom-algebra $H^{*}$ can be organized as an left $(H,\b)$-Hom-module algebra (denoted by $H^{*l}_{i}$) with the action defined as follows:
\begin{align*}
\rh : H\o H^{*} \lr H^{*}, \quad (h\rh u)(h')=u(\b^{-2}(h')\b^{i}(h)),
\end{align*}
for any $u\in H^{*}$ and $h,h'\in H$.
\end{example}
When $(H,\b)$ is a monoidal Hom-Hopf algebra, it is easy to prove that $(H^{*},\b^{*-1})$ is also a monoidal Hom-Hopf algebra with the Hom-multiplication defined as above, and the other structure is given by
\begin{align*}
\D_{H^{*}}(u)(x\o y)&=u_{1}(x)u_{2}(y),\quad 1_{H^{*}}:=\v,\quad \v_{H^{*}}(u):=u(1_{H}),
\\&S_{H^{*}}:=S^{*},\quad \b_{H^{*}}:=\d,
\end{align*}
for any $u\in H^{*}$ and $x,y\in H$. We will also denote $u(h)=\<u,h\>$, for any $u\in H^{*}$ and $h\in H$.

Let $(H,\b)$ be a monoidal Hom-bialgebra. A monoidal Hom-coalgebra $(C,\ld)$ is called a left $(H,\b)$-Hom-comodule coalgebra (see \cite{LS14}) if $(C,\ld)$ is a left $(H,\b)$-Hom-comodule together with the coaction $\r(c)=c_{[-1]}\o c_{[0]}$ such that for any $c\in C$,
\begin{align}
 c_{[-1]}\o \D(c_{[0]})&=c_{1[-1]}c_{2[-1]}\o c_{1[0]}\o c_{2[0]},\label{e1.1}
\\\nonumber c_{[-1]}\v(c_{[0]})&=\v(c)1_{H}.
\end{align}

Let $(H,\b)$ be a monoidal Hom-bialgebra. A monoidal Hom-algebra $(C,\ld)$ is called a left $(H,\b)$-Hom-comodule algebra (see \cite{LS14}) if $(C,\ld)$ is a left $(H,\b)$-Hom-comodule together with the coaction $\r_{l}(c)=c_{[-1]}\o c_{[0]}$ such that for any $c,d\in C$,
\begin{align*}
\r(cd)=c_{[-1]}d_{[-1]}\o c_{[0]}d_{[0]},\quad
\r(1_{C})=1_{H}\o 1_{C}.
\end{align*}
Similarly, we can define right $(H,\b)$-Hom-comodule algebra.

Let $(A,\a)$ be a left $(H,\b)$-Hom-comodule coalgebra. The left Hom-smash coproduct $(A\ti H,\a\o \b)$ of $(A,\a)$ and $(H,\b)$ is defined as follows, for any $a\in A$, $h\in H$:
\\(1) $A\ti H=A\o H$ as a vector space,
\\(2) Hom-comultiplication is defined by
\begin{align*}
\D(a\ti h)=a_{1}\ti a_{2[-1]}\b^{-1}(h_{1})\o \a(a_{2[0]})\ti h_{2}.
\end{align*}
Then $(A\ti H,\a\o \b)$ is a monoidal Hom-coalgebra with the counit $\v_{A}\ti \v_{H}$ (see \cite{LS14}).

Finally, we will recall the bicrossproduct structure of two monoidal Hom-Hopf algebras, which is an important result in \cite{NLZ19}. In this paper we call it left bicrossproduct.
\begin{theorem}\label{T1.1}
Let $(A, \a)$ and $(H, \b)$ be two monoidal Hom-Hopf algebras. Suppose that $(A, \a)$ is a left $(H, \b)$-Hom-module algebra with the action $H\o A\lr A, h\o a\m h\tr a$ and $(H, \b)$ is a right $(A, \a)$-Hom-comodule coalgebra with the coaction $H\lr H\o A, h\m h_{(0)}\o h_{(1)}$. Then $(A\o H, \a\o \b)$ is a monoidal Hom-Hopf algebra under the Hom-smash product and Hom-smash coproduct, that is,
\begin{align*}
(a\o h)(b\o g)=a(h_{1}\tr \a^{-1}(b))\o \b(h_{2})g,
\end{align*}
and
\begin{align*}
\D(a\o h)=a_{1}\o \b(h_{1(0)})\o \a^{-1}(a_{2})h_{1(1)}\o h_{2},
\end{align*}
with the antipode $S:A\o H\lr H\o A$ defined by
\begin{align*}
S(a\o h)=(1_{A}\o S_{H}(h_{(0)}))(S_{A}(\a^{-2}(a)\a^{-1}(h_{(1)}))\o 1_{H}),
\end{align*}
if and only if the following conditions are satisfied:
\begin{align*}
&\D(h\tr a)=\b(h_{1(0)})\tr a_{1}\o \a(h_{1(1)})(\b^{-1}(h_{2})\tr \a^{-1}(a_{2})),
\\&\v_{A}(h\tr a)=\v_{H}(h)\v_{A}(a),
\\&\r(1_{H})=1_{H}\o 1_{A},
\\&h_{2(0)}\o (h_{1}\tr a)\a^{2}(h_{2(1)})=h_{1(0)}\o \a^{2}(h_{1(1)})(h_{2}\tr a),
\\&(hg)_{(0)}\o (hg)_{(1)}=\b(h_{1(0)})g_{(0)}\o \a(h_{1(1)})(\b^{-1}(h_{2})\tr \a^{-1}(g_{(1)})).
\end{align*}
\end{theorem}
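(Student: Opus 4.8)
The statement is the Hom-analogue of Majid's bicrossproduct theorem, so the plan is to separate the algebra side, the coalgebra side, the bialgebra compatibility and the antipode, and to treat each in turn; the two implications then amount to running the same computations in opposite directions. First I would check that $(A\o H,\a\o\b)$ is a monoidal Hom-algebra under the given Hom-smash product with unit $1_A\o 1_H$: expanding Hom-associativity and reducing by the left $(H,\b)$-Hom-module algebra axioms $h\tr(bc)=(h_1\tr b)(h_2\tr c)$ and $h\tr 1_A=\v_H(h)1_A$, the Hom-module axiom $\a(h)\tr(g\tr b)=(hg)\tr\b(b)$, Hom-coassociativity of $\D_H$ and Hom-associativity of $m_A$, this collapses to identities that already hold. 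Dually, $(A\o H,\a\o\b)$ with the Hom-smash coproduct $\D(a\o h)=a_1\o\b(h_{1(0)})\o\a^{-1}(a_2)h_{1(1)}\o h_2$ and counit $\v_A\o\v_H$ is a monoidal Hom-coalgebra, proved exactly as for the left Hom-smash coproduct recalled above but now using only that $(H,\b)$ is a right $(A,\a)$-Hom-comodule coalgebra (compare \eqref{e1.1}). Neither of these two facts uses any of the five displayed conditions.

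The heart of the proof is that the monoidal Hom-bialgebra axiom --- that $\D$ and $\v$ be morphisms of monoidal Hom-algebras --- is equivalent to the five conditions. The identity $\v(1_A\o 1_H)=1_{\B}$ is trivial; $\D(1_A\o 1_H)=(1_A\o 1_H)\o(1_A\o 1_H)$ follows from $\r(1_H)=1_H\o 1_A$ and $\a(1_A)=1_A$; and $\v=\v_A\o\v_H$ is multiplicative precisely because $\v_A(h\tr a)=\v_H(h)\v_A(a)$, together with multiplicativity of $\v_A$ and $\v_H$. For multiplicativity of $\D$ I would expand $\D\bigl((a\o h)(b\o g)\bigr)$ and $\D(a\o h)\D(b\o g)$, normalise the powers of $\a,\b$ inserted by the associativity and coassociativity constraints, and show the required equality is the conjunction of (i) $\D(h\tr a)=\b(h_{1(0)})\tr a_1\o\a(h_{1(1)})(\b^{-1}(h_2)\tr\a^{-1}(a_2))$, (ii) $h_{2(0)}\o(h_1\tr a)\a^2(h_{2(1)})=h_{1(0)}\o\a^2(h_{1(1)})(h_2\tr a)$ and (iii) $(hg)_{(0)}\o(hg)_{(1)}=\b(h_{1(0)})g_{(0)}\o\a(h_{1(1)})(\b^{-1}(h_2)\tr\a^{-1}(g_{(1)}))$. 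In the ``if'' direction this gives the claim at once. In the ``only if'' direction I would recover the conditions by specialisation: applying $\mathrm{id}\o\mathrm{id}\o\mathrm{id}\o\v_H$ to $\D\bigl((1_A\o h)(a\o 1_H)\bigr)=\D(1_A\o h)\D(a\o 1_H)$ yields (i); taking $a=1_A$ and $h=g=1_H$ gives $\r(1_H)=1_H\o 1_A$; replacing $a\o 1_H$ by $1_A\o g$ yields (iii); and comparing the first two tensor legs of $\D\bigl((1_A\o h)(a\o 1_H)\bigr)$ computed in the two orders yields (ii) --- note that (ii) has no useful unit specialisation, so it must be read off from the general identity rather than from a degenerate case.

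With the monoidal Hom-bialgebra structure established, I would verify that the stated map is a two-sided antipode, i.e. $S(x_1)x_2=\v(x)(1_A\o 1_H)=x_1S(x_2)$ for $x=a\o h$, where $S(a\o h)=(1_A\o S_H(h_{(0)}))(S_A(\a^{-2}(a)\a^{-1}(h_{(1)}))\o 1_H)$; one first checks that $S$ is a morphism in $\widetilde{\mathcal H}(\mathcal M_{\B})$, i.e. $(\a\o\b)\circ S=S\circ(\a\o\b)$. The computation uses that $a\mapsto a\o 1_H$ and $h\mapsto 1_A\o h$ are morphisms of monoidal Hom-algebras, that $S$ restricts on these sub-pieces to $S_A$ and $S_H$ up to structure maps, the convolution identities $S_A\ast\mathrm{id}=\v_A 1_A$ and $S_H\ast\mathrm{id}=\v_H 1_H$, and the comodule-coalgebra relations; it then reduces to (i)--(iii) applied to $h_1$ and $h_2$.

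I expect the difficulty to be essentially computational: keeping track of the powers of $\a$ and $\b$ that the nontrivial associativity and coassociativity constraints of $\widetilde{\mathcal H}(\mathcal M_{\B})$ insert throughout the long expansion establishing that $\D$ is multiplicative, and likewise in the antipode check. The one conceptually delicate point is that condition (ii), being invisible to any unit specialisation, must be shown to be not only necessary but also --- together with (i) and (iii) --- sufficient for $\D$ to be an algebra map; organising the expansion so that exactly these three identities emerge is the crux.
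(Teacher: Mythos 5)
The paper does not prove Theorem \ref{T1.1} at all: it is recalled verbatim from \cite{NLZ19} (and the dual Theorem \ref{T2.4} is likewise dispatched with ``similar to Theorems 2.2 and 2.3 in \cite{NLZ19}''), so there is no in-paper proof to compare against. Your outline is the standard Majid-style argument and is the one any proof of this statement would follow: the Hom-smash product and Hom-smash coproduct structures hold unconditionally from the module-algebra and comodule-coalgebra axioms, the five displayed conditions are exactly the multiplicativity of $\D$ and $\v$ plus $\D(1\o 1)=(1\o1)\o(1\o1)$, and the antipode is then checked by convolution. Your identification of the ``only if'' specialisations (including the correct observation that the action--coaction compatibility condition cannot be extracted from a unit specialisation and must be read off the general identity) is the right organisation. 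Two caveats: first, you have the Hom-module axiom backwards --- for $h,g\in H$ and $a\in A$ it reads $\b(h)\tr(g\tr a)=(hg)\tr\a(a)$, not $\a(h)\tr(g\tr a)=(hg)\tr\b(a)$, since the structure map $\b$ travels with $H$ and $\a$ with $A$; second, the proposal is an outline, and in this setting the entire content is the bookkeeping of powers of $\a$ and $\b$ forced by the nontrivial constraints of $\widetilde{\mathcal H}(\mathcal M_{\B})$ (note, e.g., the asymmetric placements $\b(h_{1(0)})$, $\a^{-1}(a_2)h_{1(1)}$, $\a^2(h_{1(1)})$ in the statement), none of which you have actually verified. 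As a strategy it is sound and matches what \cite{NLZ19} does; as a proof it is not yet complete.
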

\section{The right bicrossproduct structure}
\def\theequation{2.\arabic{equation}}
\setcounter{equation} {0}
In this section, let $(A, \a)$ and $(H, \b)$ be two monoidal Hom-Hopf algebras such that $(H, \b)$ is a right $(A, \a)$-Hom-module algebra and that $(A, \a)$ is a left $(H, \b)$-Hom-comodule coalgebra. We will give another construction of the bicrossproduct structure over monoidal Hom-Hopf algebras, which be regarded as the dual of Theorem \ref{T1.1}.
\begin{definition}
Let $(A, \a)$ be a monoidal Hom-bialgebra. A monoidal Hom-algebra $(H, \b)$ is called a right $(A, \a)$-Hom-module algebra if $(H, \b)$ is a right $(A, \a)$-Hom-module with the linear map $\tl : H\o A\lr H: h\o a\m h\tl a$, such that the followings conditions hold:
\begin{align}
 (hg)&\tl a=(h\tl a_{1})(g\tl a_{2}),\label{e2.01}
\\ 1_{H}&\tl a=\v_{A}(a)1_{H},\label{e2.1}
\end{align}
for any $h,g\in H$ and $a\in A$.
\end{definition}
\begin{example}\label{E2.2}
$(1)$ Let $(A,m_{A},\D_{A})$ be a bialgebra and $(R, m_{R})$ a right $A$-module algebra in the usual sense, with the action given by $\c:R\o A\lr R , x\o a\m x\c a$. Let $\a\in Aut (A)$ be a bialgebra automorphism and $\nu\in Aut (R)$ be a algebra automorphism, such that $\nu (x\c a)=\nu(x)\c \a(a)$, for any $a\in A$ and $x\in R$. Consider the monoidal Hom-bialgebra $A_{\a}=(A,\a\circ m_{A},\D_{A}\circ \a^{-1},\a)$ and the monoidal Hom-algebra $R_{\nu}=(R,\nu\circ m_{R},\nu)$. Then the monoidal Hom-algebra $R_{\nu}$ is a right $(A_{\a},\a)$-Hom-module algebra, with the action defined by $R_{\nu}\o A_{\a}\lr R_{\nu}, x\o a\m x\tl a:= \nu(x\c a)=\nu(x)\c \a(a)$.

$(2)$ We recall from Example \ref{E1.1} that the vector space $H^{*}$ is a monoidal Hom-algebra. Then for any $j\in \mathbb{Z}$, this monoidal Hom-algebra $H^{*}$ can be organized as an right $(H,\b)$-Hom-module algebra (denoted by $H^{*r}_{j}$) with the action defined as follows:
\begin{align*}
\lh : H^{*}\o H \lr H^{*}, \quad (u\lh h)(h')=u(\b^{j}(h)\b^{-2}(h')),
\end{align*}
for any $u\in H^{*}$ and $h,h'\in H$.
\end{example}
\begin{lemma}\label{L2.3}
Let $(A, \a)$ be a monoidal Hom-bialgebra and $(H, \b)$ be a monoidal Hom-algebra. If we suppose that $(H, \b)$ is a right $(A, \a)$-Hom-module algebra, Then $(A\# H, \a\o \b)$ is a monoidal Hom-algebra with the unit $1_{A}\#1_{H}$, its Hom-multiplication is given by:
\begin{align*}
(a\# h)(b\# g)=a \a(b_{1})\# (\b^{-1}(h)\tl b_{2})g,
\end{align*}
for any $a, b\in A$ and $h,g\in H$, where $A\# H=A\o H$ as a vector space.
\end{lemma}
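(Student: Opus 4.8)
The plan is to verify directly the defining axioms of a unital monoidal Hom-associative algebra for $(A\# H,\,\alpha\otimes\beta)$ with unit $1_A\# 1_H$, in the following order: (i) $\alpha\otimes\beta$ is a bijective linear endomorphism of $A\otimes H$ and $(\alpha\otimes\beta)(1_A\# 1_H)=1_A\# 1_H$; (ii) $\alpha\otimes\beta$ is multiplicative, i.e. $(\alpha\otimes\beta)\big((a\# h)(b\# g)\big)=(\alpha(a)\#\beta(h))(\alpha(b)\#\beta(g))$; (iii) the unital law $(a\# h)(1_A\# 1_H)=(1_A\# 1_H)(a\# h)=\alpha(a)\#\beta(h)$; (iv) Hom-associativity $(\alpha(a)\#\beta(h))\big((b\# g)(c\# k)\big)=\big((a\# h)(b\# g)\big)(\alpha(c)\#\beta(k))$. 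Point (i) is immediate from bijectivity of $\alpha,\beta$ and from $\alpha(1_A)=1_A$, $\beta(1_H)=1_H$.

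For (ii), I would expand both sides. On the left, $(\alpha\otimes\beta)\big((a\# h)(b\# g)\big)=\alpha\big(a\alpha(b_1)\big)\#\beta\big((\beta^{-1}(h)\triangleleft b_2)g\big)$, and then use that $\alpha,\beta$ are algebra maps together with $\beta(h\triangleleft a)=\beta(h)\triangleleft\alpha(a)$ to bring this to $\alpha(a)\,\alpha^2(b_1)\#(h\triangleleft\alpha(b_2))\beta(g)$; on the right, using $\Delta_A(\alpha(b))=\alpha(b_1)\otimes\alpha(b_2)$, one obtains the same expression. For (iii), one computes
\[
(1_A\# 1_H)(a\# h)=1_A\,\alpha(a_1)\#(1_H\triangleleft a_2)h=1_A\,\alpha(a_1)\varepsilon_A(a_2)\# 1_H h=1_A\,a\# 1_H h=\alpha(a)\#\beta(h),
\]
using \eqref{e2.1}, the counit axiom $a_1\varepsilon_A(a_2)=\alpha^{-1}(a)$ and $1_A x=\alpha(x)$, $1_H h=\beta(h)$, and
\[
(a\# h)(1_A\# 1_H)=a\,\alpha(1_A)\#(\beta^{-1}(h)\triangleleft 1_A)\,1_H=a\,1_A\# h\,1_H=\alpha(a)\#\beta(h),
\]
using $\Delta_A(1_A)=1_A\otimes 1_A$, $\alpha(1_A)=1_A$, the right-module axiom $h\triangleleft 1_A=\beta(h)$, and $a1_A=\alpha(a)$, $h1_H=\beta(h)$.

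The heart of the argument is (iv). I would expand each side into its $A$-component and its $H$-component. The two $A$-components reduce to triple products in $A$ whose equality follows from Hom-associativity of $A$, the fact that $\Delta_A$ is an algebra map, and Hom-coassociativity of $\Delta_A$ (needed to match the Sweedler pieces $b_1\otimes b_2,\ c_{11}\otimes c_{12}\otimes c_2$ arising on one side against $b_1\otimes b_2,\ c_1\otimes c_{21}\otimes c_{22}$ on the other). The two $H$-components are the delicate part: on the right-hand side one meets an iterated action of the shape $\big((\beta^{-1}(h)\triangleleft b_2)\,g\big)\triangleleft(\text{Sweedler pieces of }\alpha(c))$, which I would simplify first by the module-algebra axiom \eqref{e2.01} to distribute $\triangleleft$ over the product in $H$, then by the right-module ``twisted associativity'' axiom to recombine $(h\triangleleft a)\triangleleft b$ into a single action $\beta(h)\triangleleft(\cdots)$ of a product in $A$, again using Hom-coassociativity of $\Delta_A$ to align the Sweedler components of $b$ and $c$ and Hom-associativity of $H$ for the $g,k$ factors; on the left-hand side the nesting of actions is already essentially in this reduced form. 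Tracking carefully the powers of $\alpha$ on the $A$-inputs and of $\beta$ on the $H$-inputs then yields the identity.

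I expect the only real obstacle to be this last computation, and it is bookkeeping rather than conceptual: every $A$-element pushed through $\triangleleft$ gets comultiplied, so the triple product unfolds into nested Sweedler sums, and one must invoke \eqref{e2.01}, \eqref{e2.1}, the module axioms and the Hom-(co)associativity identities in an order that makes each $\alpha^{\pm1}$ and $\beta^{\pm1}$ fall into place. Conceptually $(A\# H,\alpha\otimes\beta)$ is just the Hom-deformation of the ordinary smash product of a right module algebra, and --- in the conventions of Theorem \ref{T1.1} --- it is the ``algebra half'' counterpart of the Hom-smash coproduct recalled in Section~1, so the availability of such a structure is exactly what one should expect; no new phenomenon arises beyond the correct substitution of the Hom-identities.
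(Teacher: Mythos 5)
Your proposal is correct and follows essentially the same route as the paper's proof: a direct verification of the unit law and of Hom-associativity, with the associativity check resting on exactly the identities you name --- the module-algebra axiom \eqref{e2.01}, the twisted right-module associativity $(h\tl a)\tl \a(b)=\b(h)\tl(ab)$, Hom-coassociativity of $\D_{A}$, and Hom-associativity of $(H,\b)$ --- applied in essentially the same order as the paper's chain of equalities. The only difference is that you additionally record the (easy) multiplicativity and unit-preservation of $\a\o\b$, which the paper leaves implicit.
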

\begin{proof}
Firstly, According to the Hom-multiplication and Eq.$(\ref{e2.1})$, It is easy to show that $(a\# h)(1_{A}\#1_{H})=\a(a)\#\b(h)=(1_{A}\#1_{H})(a\# h)$. Secondly, for any $a, b, c\in A$ and $h,g,f\in H$, we obtain:
\begin{align*}
(\a(a)\# &\b(h))((b\# g)(c\# f))
\\&=(\a(a)\# \b(h))(b \a(c_{1})\# (\b^{-1}(g)\tl c_{2})f)
\\&=\a(a) \a((b \a(c_{1}))_{1})\# (h\tl (b \a(c_{1}))_{2})((\b^{-1}(g)\tl c_{2})f)
\\&=\a(a) \a(b_{1} \a(c_{11}))\# (h\tl (b_{2} \a(c_{12})))((\b^{-1}(g)\tl c_{2})f)
\\&=\a(a) \a(b_{1} \a(c_{11}))\# ((\b^{-1}(h)\tl b_{2})\tl \a^{2}(c_{12}))((\b^{-1}(g)\tl c_{2})f)
\\&=\a(a) \a(b_{1} c_{1})\# ((\b^{-1}(h)\tl b_{2})\tl \a^{2}(c_{21}))((\b^{-1}(g)\tl \a(c_{22}))f)
\\&=\a(a) \a(b_{1} c_{1})\# (((\b^{-2}(h)\tl \a^{-1}(b_{2}))\tl \a(c_{21}))(\b^{-1}(g)\tl \a(c_{22})))\b(f)
\\&=\a(a) \a(b_{1} c_{1})\# (((\b^{-2}(h)\tl \a^{-1}(b_{2}))\b^{-1}(g))\tl \a(c_{2}))\b(f)\quad by ~(\ref{e2.01})
\\&=a \a(b_{1}) \a^{2}(c_{1})\# (\b^{-1}((\b^{-1}(h)\tl b_{2})g)\tl \a(c_{2}))\b(f)
\\&=(a \a(b_{1})\# (\b^{-1}(h)\tl b_{2})g)(\a(c)\# \b(f))
\\&=((a\# h)(b\# g))(\a(c)\# \b(f)),
\end{align*}
so the Hom-associativity holds for $(A\# H, \a\o \b)$. This completes the proof.
\end{proof}
Here we will call $(A\# H, \a\o \b)$ a right Hom-smash product.

In what follows, we will give the construction of the right bicrossproduct $(A\bo H, \a\o \b)$ for monoidal Hom-Hopf algebras $(A, \a)$ and $(H, \b)$, its monoidal Hom-algebra structure is right Hom-smash product $(A\# H, \a\o \b)$ induced by the right action of $(A, \a)$ on $(H, \b)$, and its monoidal Hom-coalgebra structure is left Hom-smash coproduct $(A\ti H, \a\o \b)$ induced by the left coaction of $(H, \b)$ on
$(A, \a)$.
\begin{theorem}\label{T2.4}
Let $(A, \a)$ and $(H, \b)$ be two monoidal Hom-Hopf algebras. Suppose that $(A, \a)$ is a left $(H, \b)$-Hom-module algebra with the action $H\o A\lr H, h\o a\m h\tl a$ and that $(H, \b)$ is a right $(A, \a)$-Hom-comodule coalgebra with the coaction $A\lr H\o A, a\m a_{[-1]}\o a_{[0]}$. Then $(A\bo H, \a\o \b)$ is a monoidal Hom-Hopf algebra with the following structures:
\begin{align*}
(a\bo h)(b\bo g)=a \a(b_{1})\bo (\b^{-1}(h)\tl b_{2})g,
\end{align*}
and
\begin{align*}
\D(a\bo h)=(a_{1}\bo a_{2[-1]}\b^{-1}(h_{1}))\o (\a(a_{2[0]})\bo h_{2}),
\end{align*}
and its antipode $S:A\o H\lr H\o A$ is defined by
\begin{align*}
S(a\o h)=(1_{A}\o S_{H}(\b^{-1}(a_{[-1]})\b^{-2}(h)))(S_{A}(a_{[0]})\o 1_{H}),
\end{align*}
if and only if the following conditions are satisfied:
\begin{align}
&\D(h\tl a)=(\b^{-1}(h_{1})\tl \a^{-1}(a_{1}))\b(a_{2[-1]})\o h_{2}\tl \a(a_{2[0]}),\label{e2.2}
\\&\v_{H}(h\tl a)=\v_{H}(h)\v_{A}(a),\label{e2.3}
\\&(ab)_{[-1]}\o (ab)_{[0]}=(\b^{-1}(a_{[-1]})\tl \a^{-1}(b_{1}))\b(b_{2[-1]})\o a_{[0]}\a(b_{2[0]}),\label{e2.4}
\\&\r(1_{A})=1_{H}\o 1_{A},\label{e2.5}
\\&(h\tl a_{1})\b^{2}(a_{2[-1]})\o a_{2[0]}=\b^{2}(a_{1[-1]})(h\tl a_{2})\o a_{1[0]},\label{e2.6}
\end{align}
for any $a,b\in A$ and $h\in H$.
\end{theorem}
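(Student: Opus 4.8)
The plan is to recognize this theorem as the precise dual of Theorem \ref{T1.1}, so that the whole proof runs by dualizing the bicrossproduct machinery. The monoidal Hom-algebra structure of $A\bo H$ is the right Hom-smash product $A\# H$, which is already known to be a monoidal Hom-algebra by Lemma \ref{L2.3}; and the monoidal Hom-coalgebra structure is the left Hom-smash coproduct $A\ti H$, which is already known to be a monoidal Hom-coalgebra from the preliminaries (the construction recalled just before Theorem \ref{T1.1}, using that $(A,\a)$ is a left $(H,\b)$-Hom-comodule coalgebra). So the two ``one-sided'' halves are free, and the real content is: (i) the compatibility making $\D$ and $\v$ monoidal Hom-algebra maps on $A\bo H$ — this is where the five displayed conditions (\ref{e2.2})--(\ref{e2.6}) will be exactly the needed equivalences — and (ii) verification that the stated $S$ is a genuine antipode.

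First I would prove the ``if'' direction. Using Eq.~(\ref{e2.3}), checking $\v(1_{A\bo H})=1_\B$ and that $\v$ is multiplicative is immediate. For $\D(1_{A\bo H})=1\o 1$ one uses Eq.~(\ref{e2.5}). The heart is showing $\D\big((a\bo h)(b\bo g)\big)=\D(a\bo h)\D(b\bo g)$: I would expand the left side by first applying the smash-product formula and then the smash-coproduct formula, and expand the right side by multiplying the two tensor-factor-wise smash products; then reconcile the two expressions by inserting Eq.~(\ref{e2.2}) (to push $\D$ through the action $h\tl a$), Eq.~(\ref{e2.4}) (to compute the coaction on a product $ab$), and Eq.~(\ref{e2.6}) (the ``hexagon-type'' exchange relation between the action and the coaction) — together with the Hom-associativity/coassociativity bookkeeping of the $\b$'s and $\a$'s, exactly as in the computation inside Lemma \ref{L2.3} but now carrying the coproduct legs. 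This is a long but mechanical Sweedler-notation manipulation.

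Next I would check the antipode. Writing $T=A\bo H$, I must verify $S(x_1)x_2=\v(x)1_T=x_1 S(x_2)$ for $x=a\bo h$. Expanding $\D(a\bo h)=(a_1\bo a_{2[-1]}\b^{-1}(h_1))\o(\a(a_{2[0]})\bo h_2)$ and substituting the formula for $S$, one applies the smash-product multiplication and then collapses using: the Hom-comodule-coalgebra axioms (\ref{e1.1}) for $a_{[-1]}\o a_{[0]}$, the fact that $S_A$ and $S_H$ are the antipodes of $(A,\a)$ and $(H,\b)$, and the module-algebra compatibility of $\tl$. The presence of $S$ here with the ``twisted'' argument $S_H(\b^{-1}(a_{[-1]})\b^{-2}(h))$ is dictated precisely so that these cancellations telescope; I expect this to work out as in the classical bicrossproduct antipode verification (cf.\ Theorem \ref{T1.1}), modulo the $\b^{\pm 1},\a^{\pm 1}$ shifts. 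For the ``only if'' direction, I would run the same computations in reverse: assuming $A\bo H$ is a monoidal Hom-Hopf algebra, apply the bialgebra compatibilities to suitably chosen elements — e.g.\ $\D\big((1_A\bo h)(a\bo 1_H)\big)=\D(1_A\bo h)\D(a\bo 1_H)$ yields (\ref{e2.2}) and (\ref{e2.6}) after projecting with $\v$ on appropriate legs, $\D\big((a\bo 1_H)(b\bo 1_H)\big)$ yields (\ref{e2.4}) and (\ref{e2.5}), and applying $\v$ gives (\ref{e2.3}).

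The main obstacle will be step (i): keeping the $\a$- and $\b$-twists correctly placed through the double expansion of $\D$ on a product, since every associativity/coassociativity move in the Hom-setting introduces a $\b^{\pm1}$ or $\a^{\pm1}$, and the identification of the resulting expression with the product of coproducts only closes up after a careful use of (\ref{e2.6}) to swap the order of action-then-coaction. Everything else (the one-sided algebra and coalgebra structures, counit axioms, and the reverse direction) is comparatively routine given Lemma \ref{L2.3} and the smash-coproduct result already cited.
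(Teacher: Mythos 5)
Your plan is exactly the intended argument: the paper itself gives no details, deferring to the analogous computations in \cite{NLZ19}, and what those amount to is precisely your outline — the algebra half from Lemma \ref{L2.3}, the coalgebra half from the left Hom-smash coproduct, the five conditions (\ref{e2.2})--(\ref{e2.6}) emerging as the exact equivalences for $\D$ and $\v$ to be Hom-algebra maps, the antipode check, and the converse by evaluating multiplicativity of $\D$ on elements of the form $1_A\bo h$ and $a\bo 1_H$ and projecting with counits. Your identification of which condition enters where (in particular (\ref{e2.6}) as the action--coaction exchange needed to close the $\D(xy)=\D(x)\D(y)$ computation) matches the standard bicrossproduct proof, so the proposal is correct and follows the same route as the paper.
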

\begin{proof}
The proof is similar to Theorem 2.2 and 2.3 in \cite{NLZ19}.
\end{proof}
\begin{example}Let $(A,m_{A},\D_{A},1_{A},\v_{A},S_{A})$ and $(H,m_{H},\D_{H},1_{H},\v_{H},S_{H})$ be two Hopf algebras, such that $H$ is a right $A$-module algebra and $A$ is a left $H$-comodule coalgebra. Suppose that $\a\in Aut(A)$ and $\b\in Aut(H)$ are the automorphisms of Hopf algebras, and satisfy the following conditions:
\begin{align*}
&\b(h\c a)=\b(h)\c \a(a),
\\&\a(a)_{[-1]}\o \a(a)_{[0]}=\b(a_{[-1]})\o \a(a_{[0]}),
\end{align*}
for any $a\in A$ and $h\in H$. Consider the monoidal Hom-Hopf algebras $A_{\a}=(A,\a\circ m_{A},\D_{A}\circ \a^{-1}, \a, 1_{A},\v_{A}, S_{A})$ and $H_{\b}=(H,\b\circ m_{H},\D_{H}\circ \b^{-1}, \b, 1_{H},\v_{H}, S_{H})$, Then $(H_{\b},\b)$ is a right $(A_{\a},\a)$-Hom-module algebra induced by the action $h\tl a=\b(h)\c \a(a)$ and $(A_{\a},\a)$ is a left $(H_{\b},\b)$-Hom-comodule coalgebra induced by the coaction $a_{(-1)}\o a_{(0)}=\b(a_{[-1]})\o \a(a_{[0]})$ satisfy the hypotheses of Theorem $\ref{T2.4}$. Hence we obtain a right bicrossproduct structure $(A_{\a}\o H_{\b},\a\o \b)$.
\end{example}
\section{A class of monoidal Hom-Hopf algebras}
In this section, we will construct a class of monoidal Hom-Hopf algebras as an application of the theory given in the previous section.
\begin{lemma}
Let $(H, \b)$ be a monoidal Hom-Hopf algebra with bijective antipode $S$. If we define the right action of $(H, \b)$ on $(H^{op}, \b)$ by
\begin{align*}
\tl: H^{op}\o H\lr H^{op}: ~x\o h\m  x\tl h=\b^{n+1}S^{-1}(h_{2})(\b^{-1}(x)\b^{n}(h_{1})),  \quad\f x, h\in H,
\end{align*}
and the left coaction of $(H^{op}, \b)$ on $(H, \b)$ by
\begin{align*}
\r: H\lr H^{op}\o H: ~h \m  h_{[-1]}\o h_{[0]}=\b^{n}S^{-1}(h_{22})\b^{n-1}(h_{1})\o \b(h_{21}), \quad\f  h\in H.
\end{align*}
Then $(H^{op}, \b)$ is a right $(H, \b)$-Hom-module algebra, and $(H, \b)$ is a left $(H^{op}, \b)$-Hom-comodule coalgebra.
\end{lemma}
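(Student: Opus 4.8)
The plan is to verify directly that the proposed action and coaction satisfy, respectively, the axioms of a right $(H,\b)$-Hom-module algebra (Eq.~(\ref{e2.01}) and Eq.~(\ref{e2.1}) plus the Hom-module compatibilities $\b(x)\tl \b(h)=\b(x\tl h)$ and $\a(x)\tl(y\tl h)=(xy)\tl \b(h)$ written for $A=H$) and of a left $(H^{op},\b)$-Hom-comodule coalgebra (the two conditions in Eq.~(\ref{e1.1}) together with the Hom-comodule coassociativity and counit axioms). In every computation the heart of the matter is that $S^{-1}$ is an anti-Hom-coalgebra map and an anti-Hom-algebra map for $H$ (equivalently, $S$ is such for $H^{op}$), so one should first record the identities $S^{-1}(h_{1})\o S^{-1}(h_{2}) = S^{-1}(h)_{2}\o S^{-1}(h)_{1}$, $S^{-1}(hk)=S^{-1}(k)S^{-1}(h)$, $\b\circ S^{-1}=S^{-1}\circ\b$, $\v(S^{-1}(h))=\v(h)$, and the antipode relations $h_{1}S^{-1}(h_{2})=\v(h)1_{H}=S^{-1}(h_{1})h_{2}$; these follow from the standard monoidal Hom-Hopf algebra facts once $S$ is assumed bijective. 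I would also keep at hand the monoidal Hom-coassociativity relation $\b^{-1}(h_{1})\o\D(h_{2})=\D(h_{1})\o\b^{-1}(h_{2})$, since the twisted parenthesizations $h_{22}$, $h_{21}$ appearing in $\r$ will need to be reassociated repeatedly.

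Concretely, I would proceed in this order. First, check $1_{H}\tl h = \v(h)1_{H}$ and $(xy)\tl h=(x\tl h_{1})(y\tl h_{2})$ for the $H^{op}$-multiplication $x\cdot_{op}y = yx$: expand $x\tl h=\b^{n+1}S^{-1}(h_{2})\bigl(\b^{-1}(x)\b^{n}(h_{1})\bigr)$, use Hom-associativity in $H$ and the antipode relation $S^{-1}(h_{2})h_{1}$-type cancellations to collapse the $1_{H}\tl h$ case, and for the product case insert $\D(h)$, split via coassociativity, and recognize the two factors; the $op$ order is exactly what makes the two $S^{-1}$-factors come out in the right slots. Second, verify the Hom-module compatibility $\a(x)\tl(y\tl h)=(xy)\tl\b(h)$ — this is the most computation-heavy step, requiring careful bookkeeping of the powers of $\b$ and two applications of coassociativity — and the structure-map compatibility $\b(x)\tl\b(h)=\b(x\tl h)$, which is nearly immediate from $\b S^{-1}=S^{-1}\b$ and $\b$ being an algebra map. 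Third, for the comodule-coalgebra side, check the counit condition $h_{[-1]}\v(h_{[0]})=\v(h)1_{H}$ by applying $\v$ to $\b(h_{21})$ and collapsing $S^{-1}(h_{22})h_{1}$-patterns, then check the coaction coassociativity and $\r(\b(h))=\b(h_{[-1]})\o\b(h_{[0]})$, and finally the key multiplicativity-of-coaction-against-$\D$ condition Eq.~(\ref{e1.1}): that $h_{[-1]}\o\D(h_{[0]})$ equals $h_{1[-1]}\cdot_{op}h_{2[-1]}\o h_{1[0]}\o h_{2[0]}$, where again the $op$-product on the first leg is forced by the anti-multiplicativity of $S^{-1}$.

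The main obstacle I anticipate is Eq.~(\ref{e1.1}) together with the Hom-module associativity axiom: both sides, once fully expanded, involve three-fold coproducts of a single element $h$ with several nested parenthesizations and a precise exponent of $\b$ on each tensor leg, and matching them demands the simultaneous use of monoidal Hom-coassociativity (to move the $\b^{-1}$ between legs), the anti-Hom-coalgebra property of $S^{-1}$, and the Hom-algebra/antipode axioms of $H$ — a bookkeeping exercise where it is easy to be off by one power of $\b$ or to misplace a parenthesis. A sensible way to control this is to first treat the untwisted ($\b=\mathrm{id}$, ordinary Hopf algebra) case, confirm the identities there, and then reinsert the structure maps, checking that the stated exponents $n+1$, $n$, $n-1$ are exactly those forced by the requirement that every term be a well-defined morphism in $\widetilde{\mathcal H}(\mathcal M_{\B})$. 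Once these compatibilities are in place, the conclusion is simply the conjunction of the two definitions (right Hom-module algebra; left Hom-comodule coalgebra), so no further argument is needed.
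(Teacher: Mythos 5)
Your plan coincides with the paper's own proof: a direct verification of the module--algebra identities $(xy)\tl h=(x\tl h_{2})(y\tl h_{1})$ and $1_{H}\tl h=\v(h)1_{H}$ and of the comodule--coalgebra identities $h_{[-1]}\o \D(h_{[0]})=h_{2[-1]}h_{1[-1]}\o h_{1[0]}\o h_{2[0]}$ and $h_{[-1]}\v(h_{[0]})=\v(h)1_{H}$, carried out with exactly the tools you list (anti-multiplicativity and anti-comultiplicativity of $S^{-1}$, monoidal Hom-coassociativity, antipode cancellations, and the order reversal in $H^{op}$ placing the two $S^{-1}$-factors correctly), while the plain Hom-module and Hom-comodule axioms are, as in your write-up, treated as routine and left to the reader. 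One small correction: the right Hom-module associativity axiom you intend to check should read $(x\tl h)\tl\b(g)=\b(x)\tl(hg)$; your displayed version $\a(x)\tl(y\tl h)=(xy)\tl\b(h)$ interchanges the roles of the module and algebra elements and is not well-typed for $\tl:H^{op}\o H\lr H^{op}$.
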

\begin{proof}
The fact that $(H^{op}, \b)$ is a right $(H, \b)$-Hom-module and that $(H, \b)$ is a left $(H^{op}, \b)$-Hom-comodule is obvious and is left to the reader. For any $x, y,h\in H$, we have
\begin{align*}
(xy)\tl h&= \b^{n+1}S^{-1}(h_{2})(\b^{-1}(xy)\b^{n}(h_{1}))
\\&=\b^{n+1}S^{-1}(h_{2})(\b^{-1}(xy)\b^{n}(h_{1}))
\\&=(\b^{n}S^{-1}(h_{2})x)(y\b^{n}(h_{1}))
\\&=[(\b^{n}S^{-1}(h_{22})\b^{-1}(x))(\b^{n+1}(h_{212})\b^{n+1}S^{-1}(h_{211}))](y\b^{n}(h_{1}))
\\&=[(\b^{n}S^{-1}(h_{22})\b^{-1}(x))(\b^{n}(h_{21})\b^{n}S^{-1}(h_{12}))](y\b^{n+1}(h_{11}))
\\&=[\b^{n+1}S^{-1}(h_{22})(\b^{-1}(x)\b^{n}(h_{21}))][\b^{n+1}S^{-1}(h_{12})(\b^{-1}(y)\b^{n}(h_{11}))]
\\&=(x\tl h_{2})(y\tl h_{1}),
\end{align*}
and
\begin{align*}
1_{H}\tl h=\b^{n+1}S^{-1}(h_{2})\b^{n+1}(h_{1})=\v(h)1_{H},
\end{align*}
so $(H^{op}, \b)$ is a right $(H, \b)$-Hom-module algebra. On the other hand, for any $h\in H$, we get
\begin{align*}
h_{[-1]}&\o h_{[0]1}\o h_{[0]2}
\\&=\b^{n}S^{-1}(h_{22})\b^{n-1}(h_{1})\o \b(h_{211})\o \b(h_{212})
\\&=\b^{n}S^{-1}(h_{22})\b^{n}(h_{11}) \o h_{12}\o h_{21}
\\&=(\b^{n-1}S^{-1}(h_{22})(\b^{n-1}(h_{122})\b^{n-1}S^{-1}(h_{121})))\b^{n+1}(h_{111}) \o \b(h_{112})\o h_{21}
\\&=(\b^{n-1}S^{-1}(h_{22})\b^{n}(h_{122}))(\b^{n}S^{-1}(h_{121})\b^{n}(h_{111}) )\o \b(h_{112})\o h_{21}
\\&=(\b^{n}S^{-1}(h_{222})\b^{n-1}(h_{21}))(\b^{n}S^{-1}(h_{122})\b^{n-1}(h_{11}) )\o \b(h_{121})\o \b(h_{221})
\\&=h_{2[-1]}h_{1[-1]}\o h_{1[0]}\o h_{2[0]},
\end{align*}
and
\begin{align*}
\b^{n}S^{-1}(h_{22})\b^{n-1}(h_{1})\v(\b(h_{21}))=\v(h)1_{H},
\end{align*}
so $(H, \b)$ is a left $(H^{op}, \b)$-Hom-comodule coalgebra. This completes the proof.
\end{proof}
\begin{theorem}
Let $(H,\b)$ be a monoidal Hom-Hopf algebra with bijective antipode $S$. We assume that $(H^{op}, \b)$ is a right $(H, \b)$-Hom-module algebra, and that $(H, \b)$ is a left $(H^{op}, \b)$-Hom-comodule coalgebra constructed as above. Then we obtain that the right bicorssproduct $(H\bo H^{op},\b\o\b)$ is a monoidal Hom-Hopf algebra.
\end{theorem}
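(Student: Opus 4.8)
The plan is to derive the statement directly from Theorem \ref{T2.4}, applied with $(A,\a)=(H,\b)$ and with the monoidal Hom-Hopf algebra occupying the ``$(H,\b)$''-slot of that theorem taken to be $(H^{op},\b)$; recall that $(H^{op},\b)$ is again a monoidal Hom-Hopf algebra, with the same comultiplication and counit and with antipode $S^{-1}$, precisely because $S$ is bijective. By the Lemma just proved, $(H^{op},\b)$ is a right $(H,\b)$-Hom-module algebra via $\tl$ and $(H,\b)$ is a left $(H^{op},\b)$-Hom-comodule coalgebra via $\r$, so the two standing hypotheses of Theorem \ref{T2.4} are in force. Consequently it is enough to check the five compatibility conditions (\ref{e2.2})--(\ref{e2.6}) for the explicit $\tl$ and $\r$ displayed above (with $\a=\b$, and with every product occurring in the first tensor factor read as the opposite multiplication of $H^{op}$); once these hold, the Hom-smash product, the Hom-smash coproduct and the antipode of $(H\bo H^{op},\b\o\b)$ are read off verbatim from Theorem \ref{T2.4}.

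Before the verification I would isolate the auxiliary identities that do all the work: since $S$ is bijective, $S^{-1}$ is an anti-morphism of monoidal Hom-algebras, $S^{-1}(hk)=S^{-1}(k)S^{-1}(h)$, and an anti-morphism of monoidal Hom-coalgebras, $\D(S^{-1}(h))=S^{-1}(h_2)\o S^{-1}(h_1)$, it commutes with $\b$ and is compatible with $\v$, and it satisfies the twisted antipode axiom $S^{-1}(h_2)h_1=\v(h)1_H=h_2S^{-1}(h_1)$. These follow from the corresponding facts for $S$ by the usual arguments transported to the Hom-category. The verification then proceeds one equation at a time, the only tools being these identities together with Hom-associativity $\b(a)(bc)=(ab)\b(c)$, Hom-coassociativity $\b^{-1}(c_1)\o\D(c_2)=\D(c_1)\o\b^{-1}(c_2)$, multiplicativity of $\D$ and $\v$, and $\D\circ\b=(\b\o\b)\circ\D$.

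Conditions (\ref{e2.3}) and (\ref{e2.5}) are immediate: applying $\v$ to $x\tl a=\b^{n+1}S^{-1}(a_2)(\b^{-1}(x)\b^n(a_1))$ and using that $\v$ is multiplicative, $\v\circ\b=\v$, $\v\circ S^{-1}=\v$ and the counit axiom yields $\v(x\tl a)=\v(x)\v(a)$; and $\r(1_H)=\b^nS^{-1}(1_H)\b^{n-1}(1_H)\o\b(1_H)=1_H\o 1_H$. For (\ref{e2.2}) I would expand $\D(x\tl a)$ by pushing $\D$ through the two products (it is a Hom-algebra map), substitute $\D(S^{-1}(a_2))=S^{-1}(a_{22})\o S^{-1}(a_{21})$ and $\D(\b^k(c))=\b^k(c_1)\o\b^k(c_2)$, and then redistribute the $\b$-powers and regroup the two tensor legs with Hom-(co)associativity and the twisted antipode axiom until the right-hand side of (\ref{e2.2}) emerges. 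Conditions (\ref{e2.4}) and (\ref{e2.6}) are treated in the same spirit: for (\ref{e2.4}) one expands $(ab)_{[-1]}\o(ab)_{[0]}$ using $\D(ab)=\D(a)\D(b)$ and reorganizes; for (\ref{e2.6}) one resolves both sides by Hom-coassociativity and collapses the $S^{-1}$-legs against neighbouring factors by means of $S^{-1}(h_2)h_1=\v(h)1_H$.

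The genuine difficulty is bookkeeping rather than ideas: one must track the exponents of $\b$ and the nested Sweedler subscripts precisely while repeatedly moving $\b^{\pm1}$ across products and coproducts, since a single misplaced $\b^{\pm1}$ destroys the cancellations that produce the right-hand sides. The free integer $n$ carries through with identical exponent on both sides of every identity and so never obstructs anything. Apart from this, there is no conceptual obstacle, and the computation runs parallel to the Hopf-algebra case and to the analogous Hom-Hopf computations in \cite{LW16,ZGW19}.
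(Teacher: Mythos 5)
Your proposal follows exactly the paper's route: reduce to Theorem \ref{T2.4} with $A=H$ and the second slot filled by $(H^{op},\b)$, then verify conditions (\ref{e2.2})--(\ref{e2.6}) for the explicit action $\tl$ and coaction $\r$, using the anti-(co)multiplicativity of $S^{-1}$, the twisted antipode identities, and Hom-(co)associativity --- which is precisely the toolkit the paper's computation uses. The only difference is one of completeness, not of approach: the paper writes out the three nontrivial verifications (\ref{e2.2}), (\ref{e2.4}), (\ref{e2.6}) line by line, whereas you only outline them, so your argument is a correct but unexecuted sketch of the same proof.
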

\begin{proof}
Here we only need to check the five equivalent conditions $(\ref{e2.2})\sim(\ref{e2.6})$ in Theorem \ref{T2.4}. The verification of conditions (\ref{e2.3}) and (\ref{e2.5}) is straightforward. For any $x,h\in H$, we have
\begin{align*}
\D(x\tl h)&=(\b^{n+1}S^{-1}(h_{2})(\b^{-1}(x)\b^{n}(h_{1})))_{1}\o (\b^{n+1}S^{-1}(h_{2})(\b^{-1}(x)\b^{n}(h_{1})))_{2}
\\&=\b^{n+1}S^{-1}(h_{22})(\b^{-1}(x_{1})\b^{n}(h_{11}))\o \b^{n+1}S^{-1}(h_{21})(\b^{-1}(x_{2})\b^{n}(h_{12}))
\\&=\b^{n+1}S^{-1}(h_{22})(\b^{-1}(x_{1})\b^{n-1}(h_{1}))\o \b^{n+2}S^{-1}(h_{212})(\b^{-1}(x_{2})\b^{n+1}(h_{211}))
\\&=[\b^{n}S^{-1}(h_{22})(\b^{n}(h_{122})\b^{n}S^{-1}(h_{121}))](\b^{-1}(x_{1})\b^{n}(h_{11}))
\\&\quad \o \b^{n+2}S^{-1}(h_{212})(\b^{-1}(x_{2})\b^{n+1}(h_{211}))
\\&=(\b^{n}S^{-1}(h_{22})\b^{n+1}(h_{122}))(\b^{n+1}S^{-1}(h_{121})(\b^{-2}(x_{1})\b^{n-1}(h_{11})))
\\&\quad \o \b^{n+2}S^{-1}(h_{212})(\b^{-1}(x_{2})\b^{n+1}(h_{211}))
\\&=(\b^{n+1}S^{-1}(h_{222})\b^{n}(h_{21}))(\b^{n}S^{-1}(h_{12})(\b^{-2}(x_{1})\b^{n-1}(h_{11})))
\\&\quad \o \b^{n+3}S^{-1}(h_{2212})(\b^{-1}(x_{2})\b^{n+2}(h_{2211}))
\\&=\b(h_{2[-1]})(\b^{n}S^{-1}(h_{12})(\b^{-2}(x_{1})\b^{n-1}(h_{11})))
\\&\quad \o \b^{n+2}S^{-1}(h_{2[0]2})(\b^{-1}(x_{2})\b^{n+1}(h_{2[0]1}))
\\&=\b(h_{2[-1]})(\b^{-1}(x_{1})\tl \b^{-1}(h_{1}))\o x_{2}\tl \b(h_{2[0]}),
\end{align*}
we get the condition (\ref{e2.2}). And by
\begin{align*}
(hg)_{[-1]}&\o (hg)_{[0]}
\\&=\b^{n}S^{-1}((hg)_{22})\b^{n-1}((hg)_{1})\o \b((hg)_{21})
\\&=\b^{n+1}S^{-1}(g_{22})((\b^{n-1}S^{-1}(h_{22})\b^{n-2}(h_{1}))\b^{n-1}(g_{1}))\o \b(h_{21})\b(g_{21})
\\&=(\b^{n}S^{-1}(g_{22})(\b^{n}(g_{122})\b^{n}S^{-1}(g_{121})))((\b^{n-1}S^{-1}(h_{22})\b^{n-2}(h_{1}))\b^{n}(g_{11}))
\\&\quad \o \b(h_{21})\b(g_{21})
\\&=(\b^{n+1}S^{-1}(g_{222})(\b^{n-1}(g_{21})\b^{n-1}S^{-1}(g_{12})))((\b^{n-1}S^{-1}(h_{22})\b^{n-2}(h_{1}))\b^{n}(g_{11}))
\\&\quad \o \b(h_{21})\b^{2}(g_{221})
\\&=(\b^{n+1}S^{-1}(g_{222})\b^{n}(g_{21}))(\b^{n}S^{-1}(g_{12})((\b^{n-2}S^{-1}(h_{22})\b^{n-3}(h_{1}))\b^{n-1}(g_{11})))
\\&\quad \o \b(h_{21})\b^{2}(g_{221})
\\&=\b(\b^{n}S^{-1}(g_{222})\b^{n-1}(g_{21}))(\b^{n}S^{-1}(g_{12})(\b^{-2}(\b^{n}S^{-1}(h_{22})\b^{n-1}(h_{1}))\b^{n-1}(g_{11})))
\\&\quad \o \b(h_{21})\b(\b(g_{221}))
\\&=\b(g_{2[-1]})(\b^{n}S^{-1}(g_{12})(\b^{-2}(h_{[-1]})\b^{n-1}(g_{11})))\o h_{[0]}\b(g_{2[0]})
\\&=\b(g_{2[-1]})(\b^{-1}(h_{[-1]})\tl \b^{-1}(g_{1}))\o h_{[0]}\b(g_{2[0]}),
\end{align*}
we obtain the condition (\ref{e2.4}). As for the condition (\ref{e2.6}), we have
\begin{align*}
\b^{2}(h_{2[-1]})&(x\tl h_{1})\o h_{2[0]}
\\&=\b^{2}(h_{2[-1]})(\b^{n+1}S^{-1}(h_{12})(\b^{-1}(x)\b^{n}(h_{11})))\o h_{2[0]}
\\&=\b^{2}(\b^{n}S^{-1}(h_{222})\b^{n-1}(h_{21}))(\b^{n+1}S^{-1}(h_{12})(\b^{-1}(x)\b^{n}(h_{11})))\o \b(h_{221})
\\&=(\b^{n+2}S^{-1}(h_{222})\b^{n+1}(h_{21}))(\b^{n+1}S^{-1}(h_{12})(\b^{-1}(x)\b^{n}(h_{11})))\o \b(h_{221})
\\&=(\b^{n+1}S^{-1}(h_{22})\b^{n+2}(h_{122}))(\b^{n+2}S^{-1}(h_{121})(\b^{-1}(x)\b^{n}(h_{11})))\o h_{21}
\\&=(\b^{n+1}S^{-1}(h_{22})(\b^{n+1}(h_{122})\b^{n+1}S^{-1}(h_{121})))(x\b^{n+1}(h_{11}))\o h_{21}
\\&=\b^{n+2}S^{-1}(h_{22})(x\b^{n}(h_{1}))\o h_{21}
\\&=\b^{n+2}S^{-1}(h_{22})((\b^{-1}(x)(\b^{n+1}(h_{2122})\b^{n+1}S^{-1}(h_{2121})))\b^{n}(h_{1}))\o \b(h_{211})
\\&=\b^{n+2}S^{-1}(h_{22})((\b^{-1}(x)(\b^{n-1}(h_{21})\b^{n}S^{-1}(h_{122})))\b^{n+1}(h_{11}))\o \b(h_{121})
\\&=(\b^{n+1}S^{-1}(h_{22})(\b^{-1}(x)\b^{n}(h_{21})))(\b^{n+2}S^{-1}(h_{122})\b^{n+1}(h_{11}))\o \b(h_{121})
\\&=(\b^{n+1}S^{-1}(h_{22})(\b^{-1}(x)\b^{n}(h_{21})))\b^{2}(\b^{n}S^{-1}(h_{122})\b^{n-1}(h_{11}))\o \b(h_{121})
\\&=(\b^{n+1}S^{-1}(h_{22})(\b^{-1}(x)\b^{n}(h_{21})))\b^{2}(h_{1[-1]})\o h_{1[0]}
\\&=(x\tl h_{2})\b^{2}(h_{1[-1]})\o h_{1[0]}.
\end{align*}
This completes the proof.
\end{proof}
According to Theorem \ref{T2.4}, we obtain the right bicrossproduct structure on $H\o H^{op}$ with the Hom-multiplication and the Hom-comultiplication given as follows:
\begin{align*}
&(h\bo x)(g\bo y)=h \b(g_{1})\bo y(\b^{n+1}S^{-1}(g_{22})(\b^{-2}(x)\b^{n}(g_{21}))),
\\&\D(h\bo x)=(h_{1}\bo \b^{-1}(x_{1})(\b^{n}S^{-1}(h_{222})\b^{n-1}(h_{21})))\o (\b^{2}(h_{221})\bo x_{2}),
\end{align*}
for any $x,y,h,g\in H$.

Similarly, recall from Theorem $\ref{T1.1}$, if we define the left action of $(H,\b)$ on $(H^{op},\b)$ by
\begin{align*}
\tr: H\o H^{op}\lr H^{op}: ~h\o x\m  h\tr x=(\b^{n}(h_{2})\b^{-1}(x))\b^{n+1}S^{-1}(h_{1}),  \quad\f x, h\in H,
\end{align*}
and the right coaction of $(H^{op}, \b)$ on $(H, \b)$ by
\begin{align*}
\r: H\lr H\o H^{op}: ~h \m  h_{(0)}\o h_{(1)}= \b(h_{21})\o \b^{n }(h_{22})\b^{n-1}S^{-1}(h_{1}), \quad\f  h\in H.
\end{align*}
It is not hard to prove that $(H^{op}, \b)$ is a left $(H, \b)$-Hom-module algebra, and that $(H, \b)$ is a right $(H^{op}, \b)$-Hom-comodule coalgebra.
\begin{theorem}
Let $(H,\b)$ be a monoidal Hom-Hopf algebra with bijective antipode. We assume that $(H^{op}, \b)$ is a left $(H, \b)$-Hom-module algebra, and that $(H, \b)$ is a right $(H^{op}, \b)$-Hom-comodule coalgebra constructed as above. Then we get that the left bicorssproduct $(H^{op}\bo H,\b\o\b)$ is a monoidal Hom-Hopf algebra with the Hom-multiplication and the Hom-comultiplication given by:
\begin{align*}
&(x\bo h)(y\bo g)=(\b^{n}(h_{12})(\b^{-2}(y)\b^{n+1}S^{-1}(h_{11})))x\o \b(h_{2})g,
\\&\D(x\bo h)=x_{1}\o \b^{2}(h_{121})\o (\b^{n }(h_{122})\b^{n-1}S^{-1}(h_{11}))\a^{-1}(x_{2})\o h_{2},
\end{align*}
for any $x,y,h,g\in H$.
\end{theorem}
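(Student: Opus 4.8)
The plan is to derive the assertion from Theorem \ref{T1.1}, specialized to the monoidal Hom-Hopf algebras $(A,\a)=(H^{op},\b)$ and $(H,\b)$, with the left action $\tr\colon H\o H^{op}\lr H^{op}$ and the right coaction $\r\colon H\lr H\o H^{op}$, $h\m h_{(0)}\o h_{(1)}$, displayed just before the statement. As was remarked there, $(H^{op},\b)$ is a left $(H,\b)$-Hom-module algebra and $(H,\b)$ is a right $(H^{op},\b)$-Hom-comodule coalgebra (a routine verification, dual to that of the Lemma of this section). Hence the only thing left to establish is that the five compatibility conditions occurring in Theorem \ref{T1.1} hold for this particular action and coaction. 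Once they do, Theorem \ref{T1.1} produces the monoidal Hom-Hopf algebra $(H^{op}\bo H,\b\o\b)$; its Hom-multiplication is the Hom-smash product and its Hom-comultiplication the Hom-smash coproduct of Theorem \ref{T1.1}, which after reshuffling by Hom-associativity and Hom-coassociativity are precisely the two formulas in the statement, while the antipode is the map $S(x\o h)=(1_{H^{op}}\o S_{H}(h_{(0)}))(S_{H^{op}}(\b^{-2}(x)\b^{-1}(h_{(1)}))\o 1_{H})$ furnished there.

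First I would dispose of the two easy conditions. For $\v(h\tr x)=\v(h)\v(x)$, substitute $h\tr x=(\b^{n}(h_{2})\b^{-1}(x))\b^{n+1}S^{-1}(h_{1})$ and use that $\v$ is a monoidal Hom-algebra map, that $\v\circ S^{-1}=\v$ (valid since $S$ is bijective and $\v\circ S=\v$), and the counit axioms. For $\r(1_{H})=1_{H}\o 1_{H^{op}}$, put $h=1_{H}$ in $\r(h)=\b(h_{21})\o \b^{n}(h_{22})\b^{n-1}S^{-1}(h_{1})$ and use $\D(1_{H})=1_{H}\o 1_{H}$, $S^{-1}(1_{H})=1_{H}$ and $\b(1_{H})=1_{H}$.

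The real content lies in the three remaining conditions: the formula for $\D(h\tr x)$, the compatibility identity $h_{2(0)}\o(h_{1}\tr x)\b^{2}(h_{2(1)})=h_{1(0)}\o \b^{2}(h_{1(1)})(h_{2}\tr x)$, and the formula for the coaction on a product $(hg)_{(0)}\o(hg)_{(1)}$. For each I would expand both sides via the definitions of $\tr$ and $\r$ and then run the usual chain of rewrites: apply Hom-coassociativity repeatedly to redistribute the Sweedler legs of $h$ (respectively $g$), regroup the products by Hom-associativity while keeping track of the powers of $\b$, and insert units of the form $\b^{k}(h_{\cdots})\,\b^{k}S^{-1}(h_{\cdots})=\v(\cdot)1_{H}$, or its mirror $\b^{k}S^{-1}(h_{\cdots})\,\b^{k}(h_{\cdots})=\v(\cdot)1_{H}$, at the spot that produces a telescoping cancellation. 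These computations are, term by term, the dual of the verifications of conditions (\ref{e2.2}), (\ref{e2.4}) and (\ref{e2.6}) carried out in the proof of the theorem on $(H\bo H^{op},\b\o\b)$ above, and can be written out in the same format.

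The hard part is purely the combinatorial bookkeeping inside the middle condition and the coaction-on-a-product condition: there one must manage triple and quadruple iterated coproducts of $h$ and $g$ and choose, among the many admissible insertions of $1_{H}=S^{-1}(h_{\cdots})h_{\cdots}$, the one that yields a clean telescoping, all the while aligning the exponents of $\b$ and of $\b^{n}$ so that Hom-associativity and the module and comodule axioms apply with the correct shifts. Once the pattern of insertions is pinned down — exactly as in the preceding proof — each identity reduces to the other side after a finite sequence of rewrites, and the theorem follows from Theorem \ref{T1.1}.
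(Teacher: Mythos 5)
Your proposal is correct and takes essentially the same approach the paper intends: the paper gives no written proof of this theorem, presenting it as the mirror of the preceding right-bicrossproduct case, and your reduction to Theorem \ref{T1.1} with $(A,\a)=(H^{op},\b)$ followed by verification of its five compatibility conditions (two trivial, three by the same telescoping $S^{-1}$-insertion computations as in the proof of the theorem on $(H\bo H^{op},\b\o\b)$) is exactly that argument, carried out in at least as much detail as the paper itself supplies.
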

\section{Double crosscoproduct for monoidal Hom-Hopf algebras}
\def\theequation{4.\arabic{equation}}
\setcounter{equation} {0}
The purpose of this Section is to construct the double crosscoproduct for monoidal Hom-Hopf algebra, which generalizes the structure of double crosscoproduct over Hopf algebra (see \cite{CDMP97}), and plays an important role in the construction of Drinfel'd codouble.
\begin{definition}\label{D4.1}
Let $(A,\a)$ and $(H,\b)$ be two monoidal Hom-bialgebras. We call $(A,H)$ a matched copair if $(A,\a)$ is a left $(H,\b)$-Hom-comodule algebra induced by
\begin{align*}
\r_{A}: &A\lr H\o A: a\m a_{[-1]}\o a_{[0]} , \quad \f a\in A,
\end{align*}
and $(H,\b)$ is a right $(A,\a)$-Hom-comodule algebra induced by
\begin{align*}
\r_{H}: H\lr H\o A: h\m h_{(0)}\o h_{(1)}, \quad \f h\in H,
\end{align*}
such that the following conditions hold:
\begin{align}
&a_{[-1]}\v_{A}(a_{[0]})=\v_{A}(a)1_{H};\label{e4.1}
\\&a_{[-1]}\o a_{[0]1} \o a_{[0]2} = a_{1[-1]}\b(a_{2[-1](0)})\o \a^{-1}(a_{1[0]})a_{2[-1](1)}\o a_{2[0]};\label{e4.2}
\\&\v_{H}(h_{(0)})h_{(1)}=\v_{H}(h)1_{A};\label{e4.3}
\\&h_{(0)1}\o h_{(0)2}\o h_{(1)}= h_{1(0)} \o h_{1(1)[-1]}\b^{-1}(h_{2(0)})\o \a(h_{1(1)[0]})h_{2(1)};\label{e4.4}
\\&h_{(0)}a_{[-1]}\o h_{(1)}a_{[0]}=a_{[-1]}h_{(0)}\o a_{[0]}h_{(1)}.\label{e4.5}
\end{align}
\end{definition}
\begin{theorem}
Let $(A,H)$ be a matched copair of monoidal Hom-bialgebras $(A,\a)$ and $(H,\b)$. Then $(A\o H,\a\o\b)$ is a monoidal Hom-bialgebra with unit $1_{A}\o 1_{H}$, such that its Hom-multiplication, Hom-comultiplication and counit is defined as follows, for any $a,b\in A$ and $h,g\in H$,
\begin{align*}
&(a\o h)(b\o g)=ab\o hg,
\\&\D(a\o h)=a_{1}\o a_{2[-1]}h_{1(0)}\o a_{2[0]}h_{1(1)}\o h_{2},
\\&\v(a\o h)=\v_{A}(a)\o \v_{H}(h).
\end{align*}
In this case, we call $A\o H$ the double crosscoproduct of monoidal Hom-bialgebras $(A,\a)$ and $(H,\b)$. Moreover, if $(A,\a)$ and $(H,\b)$ are two monoidal Hom-Hopf algebras with the antipodes $S_{A}$ and $S_{H}$, then $(A\o H,\a\o\b)$ is also a monoidal Hom-Hopf algebra with the antipode defined by:
\begin{align*}
S(a\o h)=S_{A}(a_{[0]}h_{(1)})\o S_{H}(a_{[-1]}h_{(0)}).
\end{align*}
for any $a\in A$ and $h\in H$.
\end{theorem}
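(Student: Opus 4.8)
The plan is to verify, in order, that $(A\o H,\a\o\b)$ is a monoidal Hom-algebra, a monoidal Hom-coalgebra, that these two structures are compatible, and finally that the displayed map $S$ is an antipode. The algebra part is immediate: the multiplication is the tensor product of those of $(A,\a)$ and $(H,\b)$ with unit $1_A\o 1_H$, so unitality and Hom-associativity of $(A\o H,\a\o\b)$ are inherited from $(A,\a)$ and $(H,\b)$, and none of \eqref{e4.1}--\eqref{e4.5} is needed here. Likewise $\v=\v_A\o\v_H$ is visibly a Hom-algebra map, and $\D(1_A\o 1_H)=(1_A\o 1_H)\o(1_A\o 1_H)$ follows from $\r_A(1_A)=1_H\o 1_A$, $\r_H(1_H)=1_H\o 1_A$ together with $1_A1_A=1_A$, $1_H1_H=1_H$.

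For the Hom-coalgebra structure I would first check that $\D$ and $\v$ are morphisms in $\widetilde{\mathcal H}(\mathcal M_\B)$: compatibility of $\D$ with $\a\o\b$ reduces to compatibility of $\r_A,\r_H$ with $\a,\b$ together with $\D_A(\a(a))=\a(a_1)\o\a(a_2)$ and $\D_H(\b(h))=\b(h_1)\o\b(h_2)$, while $\v\circ(\a\o\b)=\v$ is clear. The counit property $X_1\v(X_2)=\v(X_1)X_2=(\a\o\b)^{-1}(X)$ for $X=a\o h$ is obtained by feeding $\v_A,\v_H$ into the coaction legs of $\D(a\o h)$ and invoking \eqref{e4.1} on the $A$-side and \eqref{e4.3} on the $H$-side, then the Hom-comodule counit axioms for $\r_A,\r_H$ and the coalgebra counit axioms of $(A,\a),(H,\b)$. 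The substantive step is Hom-coassociativity: one expands $(\D\o id)\D(a\o h)$ by applying $\D$ to the factor $a_{2[-1]}h_{1(0)}$ and, separately, $(id\o\D)\D(a\o h)$ by applying $\D$ to the factor $a_{2[0]}h_{1(1)}$; then one rewrites one side using \eqref{e4.2} and \eqref{e4.4} (which relate the coactions $\r_A,\r_H$ to $\D_A,\D_H$), redistributes the automorphisms $\a^{\pm1},\b^{\pm1}$ with the left/right Hom-comodule coassociativity axioms, and finally uses \eqref{e4.5} to transpose an $\r_A$-leg past an $\r_H$-leg so the two expressions coincide.

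For the last bialgebra axiom, multiplicativity of $\D$, I would expand $\D((a\o h)(b\o g))=\D(ab\o hg)$, use that $\r_A$ is a left Hom-comodule-algebra coaction (so $\r_A(ab)=a_{[-1]}b_{[-1]}\o a_{[0]}b_{[0]}$), that $\r_H$ is a right Hom-comodule-algebra coaction, and that $\D_A,\D_H$ are Hom-algebra maps, and then regroup the tensorands — reassociating with the Hom-associativity of $A$ and $H$ and transposing an $\r_A$-leg past an $\r_H$-leg via \eqref{e4.5} — into the shape of $\D(a\o h)\D(b\o g)$. Together with the algebra part this gives the monoidal Hom-bialgebra claim.

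Finally, assuming $(A,\a),(H,\b)$ are monoidal Hom-Hopf algebras, I would check that $S(a\o h)=S_A(a_{[0]}h_{(1)})\o S_H(a_{[-1]}h_{(0)})$ commutes with $\a\o\b$ — immediate from $S_A\a=\a S_A$, $S_H\b=\b S_H$ and compatibility of $\r_A,\r_H$ with $\a,\b$ — and then verify $S(X_1)X_2=\v(X)(1_A\o 1_H)=X_1S(X_2)$ directly for $X=a\o h$: substituting the formulas for $\D$ and then $S$ forces one to compute $\r_A$ and $\r_H$ on the product $a_{2[0]}h_{1(1)}$ through their comodule-algebra property, after which repeated use of the Hopf axioms $S_A(x_1)x_2=\v_A(x)1_A$, $S_H(y_1)y_2=\v_H(y)1_H$ and their opposite forms, together with \eqref{e4.1}, \eqref{e4.3} and \eqref{e4.5}, collapses both sides to $\v_A(a)\v_H(h)(1_A\o 1_H)$. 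I expect the Hom-coassociativity of the second step and the antipode identities of the last step to be the main obstacle: both are long Sweedler-notation computations in which the automorphisms $\a^{\pm1},\b^{\pm1}$ must be placed carefully and \eqref{e4.2}--\eqref{e4.5} invoked at precisely the right points; they are the direct dualizations of the double crossproduct computations for Hopf algebras in \cite{CDMP97} and of the bicrossproduct computations for monoidal Hom-Hopf algebras in \cite{NLZ19}.
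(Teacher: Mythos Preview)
Your plan is correct and follows essentially the same route as the paper: the tensor Hom-algebra structure is immediate, Hom-coassociativity is checked by expanding both sides and invoking \eqref{e4.2} on one and \eqref{e4.4} on the other, multiplicativity of $\D$ uses the comodule-algebra axioms together with \eqref{e4.5}, and the antipode is verified by a direct Sweedler computation. Two small corrections to your axiom bookkeeping: the paper's coassociativity argument does \emph{not} need \eqref{e4.5} (after \eqref{e4.2} and \eqref{e4.4} the two sides already coincide), while the left-inverse antipode check hinges on \eqref{e4.2} rather than \eqref{e4.5} --- after expanding $\r_H(a_{2[-1]}h_{1(0)})$ via the comodule-algebra property it is \eqref{e4.2} that recombines the $a$-terms into $a_{[0]1},a_{[0]2}$ so that $S_A(-_1)(-_2)=\v_A(-)1_A$ can collapse, and then \eqref{e4.1} finishes.
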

\begin{proof}
It is easy to check the fact that $A\o H$ is a monoidal Hom-algebra. For any $a\in A$ and $h\in H$, we compute
\begin{align*}
(\a^{-1}\o&\b^{-1}\o \D)\D(a\o h)
\\&=(\a^{-1}\o\b^{-1}\o \D)(a_{1}\o a_{2[-1]}h_{1(0)}\o a_{2[0]}h_{1(1)}\o h_{2})
\\&=\a^{-1}(a_{1})\o\b^{-1} (a_{2[-1]}h_{1(0)})\o (a_{2[0]}h_{1(1)})_{1}\o (a_{2[0]}h_{1(1)})_{2[-1]}(h_{2})_{1(0)}
\\&\quad\o (a_{2[0]}h_{1(1)})_{2[0]}(h_{2})_{1(1)}\o (h_{2})_{2}
\\& =\a^{-1}(a_{1})\o\b^{-1} (a_{2[-1]}h_{1(0)})\o a_{2[0]1}h_{1(1)1}\o (a_{2[0]2[-1]}h_{1(1)2[-1]})h_{21(0)}
\\&\quad\o (a_{2[0]2[0]}h_{1(1)2[0]})h_{21(1)}\o h_{22}
\\& =\a^{-1}(a_{1})\o\b^{-1} ((a_{21[-1]}\b(a_{22[-1](0)}))h_{1(0)})\o (\a^{-1}(a_{21[0]})a_{22[-1](1)})h_{1(1)1}
\\&\quad\o (a_{22[0][-1]}h_{1(1)2[-1]})h_{21(0)} \o (a_{22[0][0]}h_{1(1)2[0]})h_{21(1)}\o h_{22}\quad by~(\ref{e4.2})
\\& =\a^{-1}(a_{1})\o\b^{-1} ((a_{21[-1]}\b(a_{22[-1](0)}))\b(h_{1(0)(0)}))\o (\a^{-1}(a_{21[0]})a_{22[-1](1)})h_{1(0)(1)}
\\&\quad\o (a_{22[0][-1]}\b^{-1}(h_{1(1)[-1]}))h_{21(0)}\o (a_{22[0][0]}\a^{-1}(h_{1(1)[0]}))h_{21(1)}\o h_{22}
\\&=\a^{-1}(a_{1})\o\b^{-1} (a_{21[-1]}\b(a_{22[-1](0)}))\b(h_{11(0)(0)}\o (\a^{-1}(a_{21[0]})a_{22[-1](1)})\a(h_{11(0)(1)})
\\&\quad\o (a_{22[0][-1]}h_{11(1)[-1]})h_{12(0)}\o (a_{22[0][0]}h_{11(1)[0]})h_{12(1)}\o \b^{-1}(h_{2})
\\&=\a^{-1}(a_{1})\o a_{21[-1]}(a_{22[-1](0)}h_{11(0)(0)})\o a_{21[0]}(a_{22[-1](1)}h_{11(0)(1)})
\\&\quad\o \b(a_{22[0][-1]})(h_{11(1)[-1]}\b^{-1}(h_{12(0)}))\o (a_{22[0][0]}h_{11(1)[0]})h_{12(1)}\o \b^{-1}(h_{2}).
\end{align*}
On the other hand, we have
\begin{align*}
(\D\o &\a^{-1}\o\b^{-1})\D(a\o h)
\\ &=(\D\o \a^{-1}\o\b^{-1})(a_{1}\o a_{2[-1]}h_{1(0)}\o a_{2[0]}h_{1(1)}\o h_{2})
\\ &=(a_{1})_{1}\o (a_{1})_{2[-1]}(a_{2[-1]}h_{1(0)})_{1(0)}\o (a_{1})_{2[0]}(a_{2[-1]}h_{1(0)})_{1(1)}\o (a_{2[-1]}h_{1(0)})_{2}
\\&\quad\o \a^{-1}(a_{2[0]}h_{1(1)})\o \b^{-1}(h_{2})
\\& =a_{11}\o a_{12[-1]}(a_{2[-1]1(0)}h_{1(0)1(0)})\o a_{12[0]}(a_{2[-1]1(1)}h_{1(0)1(1)})\o a_{2[-1]2}h_{1(0)2}
\\&\quad\o \a^{-1}(a_{2[0]}h_{1(1)})\o \b^{-1}(h_{2})
\\& =a_{11}\o a_{12[-1]}(a_{2[-1]1(0)}h_{11(0)(0)})\o a_{12[0]}(a_{2[-1]1(1)}h_{11(0)(1)})
\\&\quad\o a_{2[-1]2}(h_{11(1)[-1]}\b^{-1}(h_{12(0)})) \o \a^{-1}(a_{2[0]}(\a(h_{11(1)(0)})h_{12(1)}))\o \b^{-1}(h_{2})\quad by~ (\ref{e4.4})
\\& =a_{11}\o a_{12[-1]}(\b^{-1}(a_{2[-1](0)})h_{11(0)(0)})\o a_{12[0]}(\a^{-1}(a_{2[-1](1)})h_{11(0)(1)})
\\&\quad\o a_{2[0][-1]}(h_{11(1)[-1]}\b^{-1}(h_{12(0)}))\o \a^{-1}(\a(a_{2[0][0]})(\a(h_{11(1)(0)})h_{12(1)}))\o \b^{-1}(h_{2})
\\&=\a^{-1}(a_{1})\o a_{21[-1]}(a_{22[-1](0)}h_{11(0)(0)})\o a_{21[0]}(a_{22[-1](1)}h_{11(0)(1)})
\\&\quad\o \b(a_{22[0][-1]})(h_{11(1)[-1]}\b^{-1}(h_{12(0)}))\o \a(a_{22[0][0]})(h_{11(1)(0)}\a^{-1}(h_{12(1)}))\o \b^{-1}(h_{2})
\\&=\a^{-1}(a_{1})\o a_{21[-1]}(a_{22[-1](0)}h_{11(0)(0)})\o a_{21[0]}(a_{22[-1](1)}h_{11(0)(1)})
\\&\quad\o \b(a_{22[0][-1]})(h_{11(1)[-1]}\b^{-1}(h_{12(0)}))\o (a_{22[0][0]}h_{11(1)(0)})h_{12(1)}\o \b^{-1}(h_{2}),
\end{align*}
it is obvious to find that the above two equations are equal. The proof that $\v$ is a counit map is straightforward. Therefore $A\o H$ is a monoidal Hom-coalgebra. Next, we prove that $\D$ is an algebra morphism, for any $a,b\in A$ and $h,g\in H$,  we have
\begin{align*}
\D((&a\o h)(b\o g))=\D(ab\o hg)
\\&=(ab)_{1}\o (ab)_{2[-1]}(hg)_{1(0)}\o (ab)_{2[0]}(hg)_{1(1)}\o (hg)_{2}
\\&=a_{1}b_{1}\o (a_{2[-1]}b_{2[-1]})(h_{1(0)}g_{1(0)})\o (a_{2[0]}b_{2[0]})(h_{1(1)}g_{1(1)})\o h_{2}g_{2}
\\&=a_{1}b_{1}\o [a_{2[-1]}\b^{-1}(b_{2[-1]}h_{1(0)})]\b(g_{1(0)})\o [a_{2[0]}\b^{-1}(b_{2[0]}h_{1(1)})]\b(g_{1(1)})\o h_{2}g_{2}
\\&=a_{1}b_{1}\o [a_{2[-1]}\b^{-1}(h_{1(0)}b_{2[-1]})]\b(g_{1(0)})\o [a_{2[0]}\b^{-1}(h_{1(1)}b_{2[0]})]\b(g_{1(1)})\o h_{2}g_{2}\quad by~(\ref{e4.5})
\\&=a_{1}b_{1}\o (a_{2[-1]}h_{1(0)})(b_{2[-1]}g_{1(0)})\o (a_{2[0]}h_{1(1)})(b_{2[0]}g_{1(1)})\o h_{2}g_{2}
\\&=(a_{1}\o a_{2[-1]}h_{1(0)}\o a_{2[0]}h_{1(1)}\o h_{2})(b_{1}\o b_{2[-1]}g_{1(0)}\o b_{2[0]}g_{1(1)}\o g_{2})
\\&=\D(a\o h)\D(b\o g).
\end{align*}
It is easy to compute that $\v$ is also an algebra morphism. Let $(A,\a)$ and $(H,\b)$ be two monoidal Hom-Hopf algebras, we need to prove that $A\o H$ is a monoidal Hom-Hopf algebra, here we only give the proof that $S$ is a left convolution inverse of $id$, the proof that $S$ is a right convolution inverse of $id$ is similar. For any $a\in A$ and $h\in H$, we have
\begin{align*}
S((a\o &h)_{1})(a\o h)_{2}
\\&=S(a_{1}\o a_{2[-1]}h_{1(0)})(a_{2[0]}h_{1(1)}\o h_{2})
\\&=[S_{A}(a_{1[0]}(a_{2[-1]}h_{1(0)})_{(1)})\o S_{H}(a_{1[-1]}(a_{2[-1]}h_{1(0)})_{(0)})](a_{2[0]}h_{1(1)}\o h_{2})
\\&=[S_{A}(a_{1[0]}(a_{2[-1](1)}h_{1(0)(1)}))\o S_{H}(a_{1[-1]}(a_{2[-1](0)}h_{1(0)(0)}))](a_{2[0]}h_{1(1)}\o h_{2})
\\&=S_{A}(a_{1[0]}(a_{2[-1](1)}h_{1(0)(1)}))(a_{2[0]}h_{1(1)})\o S_{H}(a_{1[-1]}(a_{2[-1](0)}h_{1(0)(0)}))h_{2}
\\&=S_{A}((\a^{-1}(a_{1[0]})a_{2[-1](1)})\a(h_{1(0)(1)}))(a_{2[0]}h_{1(1)})
\\&\quad\o S_{H}(\b^{-1}(a_{1[-1]}\b(a_{2[-1](0)}))\b(h_{1(0)(0)}))h_{2}
\\&=S_{A}(a_{[0]1}\a(h_{1(0)(1)}))(a_{[0]2}h_{1(1)})\o S_{H}(\b^{-1}(a_{[-1]})\b(h_{1(0)(0)}))h_{2} \quad by~(\ref{e4.2})
\\&=[S\a(h_{1(0)(1)})(S_{A}\a^{-1}(a_{[0]1})\a^{-1}(a_{[0]2}))]\a(h_{1(1)})\o S_{H}(\b^{-1}(a_{[-1]})\b(h_{1(0)(0)}))h_{2}
\\&=\v_{A}(a)S\a^{2}(h_{1(0)(1)})\a(h_{1(1)})\o S_{H}(\b^{2}(h_{1(0)(0)}))h_{2} \quad by~(\ref{e4.1})
\\&=\v_{A}(a)S\a^{2}(h_{1(1)1})\a^{2}(h_{1(1)2})\o S_{H}(\b(h_{1(0)}))h_{2}
\\&=\v_{A}(a)1_{A}\o S_{H}(h_{1})h_{2}
\\&=\v(a\o h)1_{A}\o 1_{H}
\end{align*}
This completes the proof.
\end{proof}
\begin{proposition}\label{P4.4}
Let $(A\bo H, \a\o \b)$ be a right bicrossproduct of monoidal Hom-Hopf algebras $(A,\a)$ and $(H,\b)$. We define $\r_{1}:H\lr A^{*}\o H$ via
\begin{align*}
\r_{1}(h)=h_{[-1]}\o h_{[0]}=\xi^{s}\o \b^{-2}(h)\tl \a^{-1}(\xi_{s}), \quad \f ~h\in H,
\end{align*}
and $\r_{2}:A^{*}\lr A^{*}\o H$ via
\begin{align*}
\r_{2}(p)=p_{(0)}\o p_{(1)}=\<p,\a^{2}(\xi_{s[0]})\>\xi^{s}\o \b(\xi_{s[-1]}), \quad \f ~p\in A^{*},
\end{align*}
where $\{\xi_{s}\}$, $\{\xi^{s}\}$ is a pair of dual bases in $A$ and $A^{*}$. Then $(H, \b)$ and $(A^{*}, \a^{*-1})$ is a matched copair.
\end{proposition}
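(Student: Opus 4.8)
The plan is to treat $(\r_{1},\r_{2})$ as the transpose of the data $(\tl,\r)$ underlying the right bicrossproduct $(A\bo H,\a\o\b)$, and to deduce each axiom required of the matched copair $(H,A^{*})$ from its counterpart in Theorem \ref{T2.4}. In the notation of Definition \ref{D4.1}, here $H$ plays the role of the first algebra and $A^{*}$ of the second, so one must show that $(H,\b)$ is a left $(A^{*},\a^{*-1})$-Hom-comodule algebra via $\r_{1}$, that $(A^{*},\a^{*-1})$ is a right $(H,\b)$-Hom-comodule algebra via $\r_{2}$, and that $(\ref{e4.1})\sim(\ref{e4.5})$ hold, read with $A\leadsto H$, $H\leadsto A^{*}$, $\a\leadsto\b$, $\b\leadsto\a^{*-1}$. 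Throughout one uses the dual basis relations $\s_{s}\<\xi^{s},a\>\xi_{s}=a$ and $\s_{s}f(\xi_{s})\xi^{s}=f$ for $f\in A^{*}$, together with $\<\a^{*-1}(p),a\>=\<p,\a^{-1}(a)\>$ and $\<p\bu q,a\>=\<p,a_{1}\>\<q,a_{2}\>$; these let one evaluate both legs of $\r_{1},\r_{2}$, namely pairing the $A^{*}$-leg of $\r_{1}(h)$ with $a$ returns $\b^{-2}(h)\tl\a^{-1}(a)$, and pairing the $A^{*}$-leg of $\r_{2}(p)$ with $b$ returns $\<p,\a^{2}(b_{[0]})\>\b(b_{[-1]})$. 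Thus $\r_{1}$ repackages the right action $\tl$ of $(A,\a)$ on $(H,\b)$, and $\r_{2}$ repackages the left coaction $\r$ of $(H,\b)$ on $(A,\a)$.

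First I would establish the two Hom-comodule algebra structures. For $\r_{1}$: Hom-coassociativity and the counit law are the transposes of the right $(A,\a)$-module axioms of $(H,\b)$ together with $\b(h\tl a)=\b(h)\tl\a(a)$, while the multiplicativity $\r_{1}(hg)=\r_{1}(h)\r_{1}(g)$ and $\r_{1}(1_{H})=\v_{A}\o 1_{H}$ are precisely the transposes of $(\ref{e2.01})$ and $(\ref{e2.1})$, once one notes that the componentwise product on $A^{*}\o H$ pairs with $\D_{A}$ on the $A^{*}$-side and with $m_{H}$ on the $H$-side. For $\r_{2}$: Hom-coassociativity and the counit law transpose the Hom-comodule axioms of $\r$ (coassociativity, the compatibility $\r(\a(a))=\b(a_{[-1]})\o\a(a_{[0]})$, and the counit relation), while the multiplicativity $\r_{2}(p\bu q)=\r_{2}(p)\r_{2}(q)$ and $\r_{2}(\v_{A})=\v_{A}\o 1_{H}$ are the transposes of the comodule-coalgebra compatibility $(\ref{e1.1})$. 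This last point is the conceptual heart of the argument: $\D_{A}$ dualizes to the product $\bu$ on $A^{*}$, and the left $(H,\b)$-comodule-coalgebra law for $\r$ becomes exactly the right $(H,\b)$-comodule-algebra law for $\r_{2}$.

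Next I would verify $(\ref{e4.1})\sim(\ref{e4.5})$ for $(H,A^{*})$. Conditions $(\ref{e4.1})$ and $(\ref{e4.3})$ are short: $(\ref{e4.1})$ for $(H,A^{*})$ reads $\s_{s}\v_{H}(\b^{-2}(h)\tl\a^{-1}(\xi_{s}))\xi^{s}=\v_{H}(h)\v_{A}$, which is $(\ref{e2.3})$ together with $\v_{A}\circ\a^{-1}=\v_{A}$; and $(\ref{e4.3})$ for $(H,A^{*})$, after evaluating $\r_{2}$ against $1_{A}$, collapses to $\r(1_{A})=1_{H}\o 1_{A}$, i.e.\ $(\ref{e2.5})$. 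The remaining three identities are the substantive ones, and each is obtained by pairing both sides against suitable elements of $A$ (respectively $A\o A$) and clearing the dual bases: $(\ref{e4.2})$ for $(H,A^{*})$ becomes the formula $(\ref{e2.2})$ for $\D(h\tl a)$; $(\ref{e4.4})$ for $(H,A^{*})$ becomes $(\ref{e2.4})$, the formula for $(ab)_{[-1]}\o(ab)_{[0]}$ (on the $A^{*}$-side $m_{A}$ dualizes to $\D_{A^{*}}$, which accounts for the $\D$ appearing in $(\ref{e4.4})$); and $(\ref{e4.5})$ for $(H,A^{*})$ becomes the commutation relation $(\ref{e2.6})$. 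In each case one also invokes the Hom-coassociativity of $\D_{H}$, $\D_{A}$ and $\r$ to align the three tensor legs.

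The only genuine obstacle is bookkeeping: in $(\ref{e4.2})$, $(\ref{e4.4})$ and $(\ref{e4.5})$ one must insert the dual bases $\{\xi_{s}\},\{\xi^{s}\}$ into the correct tensor slots, transport the powers of $\a$ through $\r$ and the powers of $\b$ through $\tl$ using $\r(\a(a))=\b(a_{[-1]})\o\a(a_{[0]})$ and $\b(h\tl a)=\b(h)\tl\a(a)$, and then recognize the outcome as $(\ref{e2.2})$, $(\ref{e2.4})$ or $(\ref{e2.6})$ after relabelling indices. No idea beyond Theorem \ref{T2.4} is needed; the content of the proposition is that, under the evaluation pairing $A\o A^{*}\lr\B$, the bicrossproduct axioms for $(A,H)$ and the matched-copair axioms for $(H,A^{*})$ are transposes of one another.
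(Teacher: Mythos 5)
Your proposal is correct and follows essentially the same route as the paper: the paper's proof also verifies the two Hom-comodule-algebra structures and the relations (\ref{e4.2}), (\ref{e4.4}), (\ref{e4.5}) by evaluating against elements of $A$, $A\o A$, or $A\o H^{*}$ via the dual bases, reducing each to (\ref{e2.01}), (\ref{e1.1}), (\ref{e2.2}), (\ref{e2.4}) and (\ref{e2.6}) respectively, exactly as you predict (with (\ref{e4.1}) and (\ref{e4.3}) likewise coming from (\ref{e2.3}) and (\ref{e2.5})). Your identification of the transposition dictionary matches the computations actually carried out in the paper.
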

\begin{proof}
Firstly, we need to prove that $(H, \b)$ is a left $(A^{*}, \a^{*-1})$-Hom-comodule algebra, and that
$(A^{*}, \a^{*-1})$ is a right $(H, \b)$-Hom-comodule algebra. It is easy to check that $(H, \b)$ is a left $(A^{*}, \a^{*-1})$-Hom-comodule. For any $h, g\in H$ and $a\in A$, we have
\begin{align*}
((hg)_{[-1]}\o (hg)_{[0]}) (a)&=(\xi^{s}\o \b^{-2}(hg)\tl\a^{-1}( \xi_{s}))(a)
\\&=\b^{-2}(hg)\tl\a^{-1}(a)
\\&=( \b^{-2}(h )\tl \a^{-1}(a_{1}))(\b^{-2}( g)\tl \a^{-1}(a_{2}))
\\&=\<\xi^{s}\bu\xi^{t}, a\>( \b^{-2}(h)\tl \a^{-1}(\xi_{s}))(\b^{-2}(g)\tl \a^{-1}(\xi_{t}))
\\&=(\xi^{s}\bu\xi^{t}\o( \b^{-2}(h)\tl \a^{-1}(\xi_{s}))(\b^{-2}(g)\tl \a^{-1}(\xi_{t})))(a)
\\&=(h_{[-1]}g_{[-1]}\o h_{[0]}g_{[0]})(a),
\end{align*}
and $\r_{1}(1_{H})(a)=(\xi^{s}\o 1_{H}\tl \xi_{s})(a)=1_{H}\tl a=\v_{A}(a)1_{H}=(1_{A^{*}}\o 1_{H})(a)$. Therefore $(H, \b)$ is a left $(A^{*}, \a^{*-1})$-Hom-comodule algebra. On the other hand, let $\g^{-1}= \a^{*}$, for any $p\in A^{*}$, $a\in A$ and $u,v\in H^{*}$, we have
\begin{align*}
\<\g^{-1}(p_{(0)})&\o p_{(1)1}\o p_{(1)2},a\o u\o v\>
\\&=\<\g^{-1}(p_{(0)}),a\>\<u, p_{(1)1}\>\<v,p_{(1)2}\>
\\&=\<p_{(0)},\a(a)\>\<u\bu v, p_{(1)}\>
\\&=\<p,\a^{3}(a_{[0]})\>\<u\bu v, \b^{2}(a_{[-1]})\>
\\&=\<p,\a^{3}(a_{[0]})\>\<u, \b^{2}(a_{[-1]1})\>\<v, \b^{2}(a_{[-1]2})\>
\\&=\<p,\a^{4}(a_{[0][0]})\>\<u,\b(a_{[-1]})\>\<\d(v),\b^{3}(a_{[0][-1]})\>
\\&=\<p_{(0)},\a^{2}(a_{[0]})\>\<u,\b(a_{[-1]})\>\<\d(v),p_{(1)}\>
\\&=\<p_{(0)(0)},a\>\<u, p_{(0)(1)}\>\<v,\b^{-1}(p_{(1)})\>
\\&=\<p_{(0)(0)}\o p_{(0)(1)}\o \b^{-1}(p_{(1)}),a\o u\o v\>,
\end{align*}
and
\begin{align*}
\<\r_{2}(\g(p)),a\o u\>&=\<(\g(p))_{(0)}, a\>\<u,(\g(p))_{(1)}\>
\\&=\<\g(p), \a^{2}(a_{[0]})\>\<u, \b(a_{[-1]})\>
\\&=\<\g(p_{(0)}), a\>\<u,\g(p_{(1)})\>=\<\g(p_{(0)})\o \g(p_{(0)}),a\o u\>.
\end{align*}
it is simple to verify that $\v_{A^{*}}(p_{(1)})p_{(0)}=\g^{-1}(p)$.
So we obtain that $(A^{*}, \a^{*-1})$ is a right $(H, \b)$-Hom-comodule. Next, for any $p,q\in A^{*}$ and $a\in A$, we have
\begin{align*}
\r_{2}(p\bu q)(a)&=\<(p\bu q)_{(0)},a\>(p\bu q)_{(1)}
\\&=\<p\bu q, \a^{2}(a_{[0]})\>\b(a_{[-1]})
\\&=\<p, \a^{2}(a_{[0]1})\>\<q, \a^{2}(a_{[0]2})\>\b(a_{[-1]})
\\&=\<p,\a^{2}(a_{1[0]})\>\<q,\a^{2}(a_{2[0]})\>\b(a_{1[-1]})\b(a_{2[-1]})\quad by~(\ref{e1.1})
\\&=\<p_{(0)},a_{1}\>\<q_{(0)},a_{2}\>p_{(1)}q_{(1)}
\\&=(p_{(0)}\bu q_{(0)}\o p_{(1)} q_{(1)})(a),
\end{align*}
and it is easy to prove that $\r_{2}(1_{A^{*}})=1_{A^{*}}\o 1_{H}$. Therefore $(A^{*}, \a^{*-1})$ is a right $(H, \b)$-Hom-comodule algebra. Finally, we check the relations $(\ref{e4.1})\sim (\ref{e4.5})$ given in Definition \ref{D4.1}, Here we only prove the relations (\ref{e4.2}), (\ref{e4.4}) and (\ref{e4.5}). For any $p\in A^{*}$, $a\in A$, $h\in H$ and $u\in H^{*}$, we have
\begin{align*}
(h_{[-1]}&\o h_{[0]1} \o h_{[0]2})(a)
\\& =(\xi^{s}\o (\b^{-2}(h)\tl \a^{-1}(\xi_{s}))_{1} \o (\b^{-2}(h)\tl \a^{-1}(\xi_{s}))_{2})(a)
\\& =(\b^{-2}(h)\tl \a^{-1}(a))_{1} \o (\b^{-2}(h)\tl \a^{-1}(a))_{2}
\\& =(\b^{-1}((\b^{-2}(h))_{1})\tl \a^{-1}((\a^{-1}(a))_{1}))\b((\a^{-1}(a))_{2[-1]})\o (\b^{-2}(h))_{2}\tl \a((\a^{-1}(a))_{2[0]})
\\&=(\b^{-3}(h_{1})\tl \a^{-2}(a_{1}))a_{2[-1]}\o \b^{-2}(h_{2})\tl a_{2[0]}
\\&= \<\xi^{s}, a_{1}\>\<\xi^{t},\a(a_{2[0]})\>(\b^{-3}(h_{1})\tl \a^{-2}(\xi_{s}))a_{2[-1]}\o \b^{-2}(h_{2})\tl \a^{-1}(\xi_{t})
\\&= \<\xi^{s}, a_{1}\>\<\xi^{t}_{(0)},\a^{-1}(a_{2})\>(\b^{-3}(h_{1})\tl \a^{-2}(\xi_{s}))\xi^{t}_{(1)}\o \b^{-2}(h_{2})\tl \a^{-1}(\xi_{t})
\\&= (\xi^{s}\bu\g(\xi^{t}_{(0)})\o \b^{-1}(\b^{-2}(h_{1})\tl \a^{-1}(\xi_{s}))\xi^{t}_{(1)}\o \b^{-2}(h_{2})\tl \a^{-1}(\xi_{t}))(a)
\\&= (h_{1[-1]}\g(h_{2[-1](0)})\o \b^{-1}(h_{1[0]})h_{2[-1](1)}\o h_{2[0]})(a),
\end{align*}
we get the relation (\ref{e4.2}). And by
\begin{align*}
(p_{(0)1}\o p_{(0)2}\o p_{(1)})(a\o b)&=\<p_{(0)1},a\>\<p_{(0)2}, b\>p_{(1)}
\\&=\<p_{(0)},ab\>p_{(1)}=\<p,\a^{2}((ab)_{[0]})\>\b((ab)_{[-1]})
\\&=\<p,\a^{2}(a_{[0]})\a^{3}(b_{2[0]})\>(a_{[-1]}\tl b_{1})\b^{2}(b_{2[-1]})
\\&=\<p_{1} ,\a^{2}(a_{[0]})\>\<p_{2},\a^{3}(b_{2[0]})\>(\b^{-1}(\b(a_{[-1]}))\tl b_{1})\b^{2}(b_{2[-1]})
\\&=\<p_{1(0)} ,a\>\<\xi^{s},b_{1}\>\<p_{2(0)},\a(b_{2})\>\b(\b^{-2}(p_{1(1)})\tl \a^{-1}(\xi_{s}))p_{2(1)}
\\&=\<p_{1(0)} ,a\>\<p_{1(1)[-1]},b_{1}\>\<\g^{-1}(p_{2(0)}),b_{2}\>\b(p_{1(1)[0]})p_{2(1)}
\\&= (p_{1(0)} \o p_{1(1)[-1]}\g^{-1}(p_{2(0)})\o \b(p_{1(1)[0]})p_{2(1)})(a\o b),
\end{align*}
we obtain the relation (\ref{e4.4}). As for the relation (\ref{e4.5}),
\begin{align*}
\<p_{(0)}\bu h_{[-1]}\o p_{(1)}h_{[0]},a\o u\>&=\<p_{(0)},a_{1}\>\<\xi^{s},a_{2}\>\<u_{1},p_{(1)}\>\<u_{2},\b^{-2}(h)\tl \a^{-1}(\xi_{s})\>
\\&=\<p,\a^{2}(a_{1[0]})\>\<u_{1},\b(a_{1[-1]})\>\<u_{2},\b^{-2}(h)\tl \a^{-1}(a_{2})\>
\\&=\<p,\a^{2}(a_{1[0]})\>\<u,\b(a_{1[-1]})(\b^{-2}(h)\tl \a^{-1}(a_{2}))\>
\\&=\<p,\a^{2}(a_{2[0]})\>\<u,(\b^{-2}(h)\tl \a^{-1}(a_{1}))\b(a_{2[-1]})\>\quad by ~(\ref{e2.6})
\\&=\<p,\a^{2}(a_{2[0]})\>\<u_{1},\b^{-2}(h)\tl \a^{-1}(a_{1})\>\<u_{2},\b(a_{2[-1]})\>
\\&=\<\xi^{s},a_{1}\>\<p_{(0)},a_{2}\>\<u_{1},\b^{-2}(h)\tl \a^{-1}(\xi_{s})\>\<u_{2},p_{(1)}\>
\\&=\<h_{[-1]}\bu p_{(0)}\o h_{[0]}p_{(1)},a\o u\>,
\end{align*}
This completes the proof.
\end{proof}
As a conclusion of the above result, we get the double crosscoproduct $(H\o A^{*}, \b \o \a^{*-1})$ with the Hom-multiplication, the Hom-comultiplication given by
\begin{align*}
&(h\o p)(g\o q)=hg\o pq,
\\&\D(h\o p)=h_{1}\o \<p_{1},\a^{2}(\xi_{s[0]})\>\xi^{t}\bu \xi^{s}\o (\b^{-2}(h_{2})\tl \a^{-1}(\xi_{t}))\b(\xi_{s[-1]})\o p_{2},
\end{align*}
for any $h,g\in H$, $p,q\in A^{*}$.
\begin{corollary}\label{C4.5}
Let $(H,\b)$ be a monoidal Hom-Hopf algebra with bijective antipode $S$, then we have the Drinfel'd codouble $T(H)=(H^{op}\bo H^{*},\b\o \b^{*-1})$ with the tensor Hom-multiplication and the Hom-comultiplication given by
\begin{align*}
\D(h\o u)&=h_{1}\o \<u_{1},\b^{2}(e_{s[0]})\>e^{t}\bu e^{s}\o \b(e_{s[-1]})(\b^{-2}(h_{2})\tl \b^{-1}(e_{t}))\o u_{2}
\\&=h_{1}\o \<u_{1},\b^{2}(\b(e_{s21}))\>e^{t}\bu e^{s}\o \b(\b^{n}S^{-1}(e_{s22})\b^{n-1}(e_{s1}))
\\&\quad (\b^{-2}(h_{2})\tl \b^{-1}(e_{t}))\o u_{2}
\\&=h_{1}\o \<u_{1},\b^{2}(\b(e_{s21}))\>e^{t}\bu e^{s}\o \b(\b^{n}S^{-1}(e_{s22})\b^{n-1}(e_{s1}))
\\&\quad [\b^{n}S^{-1}(e_{t2})(\b^{-1}(\b^{-2}(h_{2}))\b^{n-1}(e_{t1}))]\o u_{2}
\\&=h_{1}\o \<u_{1},\b^{3}(e_{s21})\>e^{t}\bu e^{s}\o (\b^{n+1}S^{-1}(e_{s22})\b^{n}(e_{s1}))
\\&\quad [\b^{n}S^{-1}(e_{t2})(\b^{-3}(h_{2})\b^{n-1}(e_{t1}))]\o u_{2}
\\&=h_{1}\o \<u_{1},\b^{3}(e_{s21})\>e^{t}\bu e^{s}\o(\b^{n+1}S^{-1}(e_{s22})(\b^{n-1}(e_{s1})\b^{n-1}S^{-1}(e_{t2})))
\\&\quad (\b^{-2}(h_{2})\b^{n}(e_{t1}))\o u_{2}
\\&=h_{1}\o \<u_{1},\b^{3}(a_{221})\>e^{t}\bu e^{s}\o(\b^{n+1}S^{-1}(a_{222})(\b^{n-1}(a_{21})\b^{n-1}S^{-1}(a_{12})))
\\&\quad (\b^{-2}(h_{2})\b^{n}(a_{11}))\o u_{2}
\\&=h_{1}\o \<u_{1},\b^{2}(a_{21})\>e^{t}\bu e^{s}\o(\b^{n }S^{-1}(a_{22})(\b^{n }(a_{122})\b^{n }S^{-1}(a_{121})))
\\&\quad (\b^{-2}(h_{2})\b^{n}(a_{11}))\o u_{2}
\\&=h_{1}\o \<u_{1},\b^{2}(a_{21})\>e^{t}\bu e^{s}\o
\b^{n+1}S^{-1}(a_{22})(\b^{-2}(h_{2})\b^{n-1}(a_{1}))\o u_{2}
\\&=h_{1}\o \<u_{1},\b^{2}(e_{s1})\>e^{t}\bu e^{s}\o\b^{n+1}S^{-1}(e_{s2})(\b^{-2}(h_{2})\b^{n-1}(e_{t}))\o u_{2}
\\&=h_{1}\o e^{t}\bu(\b^{*2}(u_{1})\bu e^{s})\o\b^{n+1}S^{-1}(e_{s})(\b^{-2}(h_{2})\b^{n-1}(e_{t}))\o u_{2},
\end{align*}
for any $h\in H$ and $u\in H^{*}$, where $\{e_{s}\}$, $\{e^{s}\}$ is a pair of dual bases in $H$ and $H^{*}$.
\end{corollary}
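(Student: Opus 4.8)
The plan is to obtain $T(H)$ not directly but as a double crosscoproduct attached to the concrete right bicrossproduct built in Section 3, and then to compute $\D$ by specialization. First I would invoke the theorem of Section 3 which produces, for each fixed $n\in\mathbb Z$, the right bicrossproduct $(H\bo H^{op},\b\o\b)$ of the monoidal Hom-Hopf algebras $(H,\b)$ and $(H^{op},\b)$ --- the latter being a monoidal Hom-Hopf algebra with antipode $S^{-1}$, since $S$ is bijective. Applying Proposition \ref{P4.4} to this bicrossproduct (so the ``$A$'' there is $H$, the ``$H$'' there is $H^{op}$, the ``$A^{*}$'' there is $H^{*}$, $\a=\b$, and $\{e_{s}\},\{e^{s}\}$ are dual bases of $H$ and $H^{*}$) gives that $(H^{op},\b)$ and $(H^{*},\b^{*-1})$ form a matched copair in the sense of Definition \ref{D4.1}, with $\r_{1},\r_{2}$ the specializations of the maps in Proposition \ref{P4.4} to the action $\tl$ and coaction $\r$ of Section 3. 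By the double crosscoproduct theorem of Section 4, the double crosscoproduct $(H^{op}\o H^{*},\b\o\b^{*-1})$ is then a monoidal Hom-Hopf algebra; this is $T(H)$, and its multiplication is the tensor (componentwise) Hom-multiplication, the first leg carrying the opposite product of $H$, so the algebra structure needs no further check.

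It remains to make $\D$ explicit. I would start from the comultiplication of the double crosscoproduct $(H\o A^{*},\b\o\a^{*-1})$ displayed right after Proposition \ref{P4.4},
\begin{align*}
\D(h\o p)=h_{1}\o \<p_{1},\a^{2}(\xi_{s[0]})\>\,\xi^{t}\bu \xi^{s}\o \big(\b^{-2}(h_{2})\tl \a^{-1}(\xi_{t})\big)\b(\xi_{s[-1]})\o p_{2},
\end{align*}
put $\a=\b$, $\xi_{s}=e_{s}$, $\xi^{s}=e^{s}$, rewrite the product in the third leg as the opposite product of $H$, and substitute the Section 3 formulas $e_{s[-1]}\o e_{s[0]}=\b^{n}S^{-1}(e_{s22})\b^{n-1}(e_{s1})\o\b(e_{s21})$ and $\b^{-2}(h_{2})\tl\b^{-1}(e_{t})=\b^{n}S^{-1}(e_{t2})\big(\b^{-3}(h_{2})\b^{n-1}(e_{t1})\big)$. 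Then I would simplify step by step using: $\D\circ\b=(\b\o\b)\circ\D$ and $\b S^{\pm1}=S^{\pm1}\b$; Hom-associativity $\b(a)(bc)=(ab)\b(c)$ and Hom-coassociativity to move bracketings and relabel iterated coproducts; the antipode identities for $H^{op}$, i.e. $S^{-1}(h_{2})h_{1}=\v(h)1_{H}=h_{2}S^{-1}(h_{1})$; and the dual-basis reconstruction identities $\s_{s}\<u,e_{s}\>e^{s}=u$, $\s_{s}\<e^{s},h\>e_{s}=h$ together with the canonical-element identity $\s_{s,t}(e^{t}\bu e^{s})\o e_{t}\o e_{s}=\s_{a}e^{a}\o a_{1}\o a_{2}$. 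The latter lets one collapse the double sum over $\{e_{t}\}\times\{e_{s}\}$ into a single sum over an auxiliary element $a$ --- performing the antipode cancellations $a_{122}S^{-1}(a_{121})=\v(a_{12})1_{H}$ in that form --- and then unpack it again; carried out line by line, this is exactly the displayed chain of ten equalities, ending at the asserted formula for $\D(h\o u)$.

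The conceptual part is immediate from results already in hand; the real work is the last step, and the main obstacle there is the bookkeeping. One must keep every power of $\b$ correct --- the parameter $n$ together with the additional shifts forced by the associativity constraint and by Hom-coassociativity --- and, above all, apply the canonical-element identity at precisely the right moment so that the $S^{-1}$-cancellations can be carried out before re-expanding; a single misplaced bracket or $\b$-exponent derails the whole chain.
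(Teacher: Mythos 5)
Your proposal is correct and follows essentially the same route as the paper: the corollary is obtained by feeding the Section~3 right bicrossproduct $(H\bo H^{op},\b\o\b)$ into Proposition~\ref{P4.4} to get the matched copair and hence the double crosscoproduct $(H^{op}\o H^{*},\b\o\b^{*-1})$, and then specializing the general comultiplication formula with the explicit $\tl$ and $\r$ of Section~3 and simplifying via Hom-(co)associativity, the antipode cancellations, and the dual-basis/canonical-element identities — which is exactly the displayed chain of equalities constituting the paper's own derivation.
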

As the dual of Proposition $\ref{P4.4}$, we have the following result.
\begin{proposition}
Let $(A\bo H, \a\o \b)$ be a left bicrossproduct of monoidal Hom-Hopf algebras $(A,\a)$ and $(H,\b)$. We define $\r_{3}:A\lr A\o H^{*}$ via
\begin{align*}
\r_{3}(a)=a_{(0)}\o a_{(1)}=\b^{-1}(e_{s})\tr \a^{-2}(a)\o e^{s}, \quad \f ~a\in A,
\end{align*}
and $\r_{4}:H^{*}\lr  A\o H^{*}$ via
\begin{align*}
\r_{4}(u)=u_{[-1]}\o u_{[0]}=\a (e_{s(1)})\o \<u, \b^{2}(e_{s(0)})\>e^{s}, \quad \f ~u\in H^{*},
\end{align*}
where $\{e_{s}\}$, $\{e^{s}\}$ is a pair of dual bases in $H$ and $H^{*}$. Then $(H^{*}, \b^{*-1})$ and $(A, \a)$ is a matched copair.
\end{proposition}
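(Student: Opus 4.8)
The plan is to establish this Proposition as the precise dual of Proposition~\ref{P4.4}, so the argument runs along exactly the same lines, but with the roles of module and comodule, and of $H$ and $A$, interchanged. Throughout one uses the data coming with the left bicrossproduct of Theorem~\ref{T1.1}: $(A,\a)$ is a left $(H,\b)$-Hom-module algebra via $h\tr a$, $(H,\b)$ is a right $(A,\a)$-Hom-comodule coalgebra via $h\m h_{(0)}\o h_{(1)}$, and all five compatibility conditions listed after Theorem~\ref{T1.1} are at our disposal. In the matched copair $(H^{*},A)$, the space $H^{*}$ plays the role of ``$A$'' with left coaction $\r_{4}:H^{*}\lr A\o H^{*}$, and $A$ plays the role of ``$H$'' with right coaction $\r_{3}:A\lr A\o H^{*}$.

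First I would check that $\r_{3}$ makes $(A,\a)$ a right $(H^{*},\b^{*-1})$-Hom-comodule and that $\r_{4}$ makes $(H^{*},\b^{*-1})$ a left $(A,\a)$-Hom-comodule. In each case one pairs both sides against a test element of $H$, expands using the dual bases $\{e_{s}\}$, $\{e^{s}\}$, and reduces the Hom-coassociativity identity to the module axioms for $\tr$ (for $\r_{3}$) and to the right comodule-coalgebra axioms for $h\m h_{(0)}\o h_{(1)}$ (for $\r_{4}$); the counit conditions are one-line dual-basis computations, and compatibility with $\a$ and $\b^{*-1}$ follows from the $\a$- and $\b$-equivariance of $\tr$ and of the coaction on $H$. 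Next, that $\r_{3}$ and $\r_{4}$ are monoidal Hom-algebra maps, i.e.\ that $(A,\a)$ and $(H^{*},\b^{*-1})$ are the respective comodule algebras: for $\r_{3}$ this uses the module-algebra axioms $h\tr(ab)=(h_{1}\tr a)(h_{2}\tr b)$ and $h\tr 1_{A}=\v(h)1_{A}$ together with $\D_{H}$ applied to the dual-basis elements, while for $\r_{4}$ it uses the comodule-coalgebra axioms for $H$ and the definition $(u\bu v)(h)=u(h_{1})v(h_{2})$; unitality in both cases is immediate.

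Finally I would verify the five matched-copair conditions $(\ref{e4.1})\sim(\ref{e4.5})$ of Definition~\ref{D4.1}, read off with ``$A$'' replaced by $H^{*}$ and ``$H$'' by $A$. As in the proof of Proposition~\ref{P4.4}, conditions $(\ref{e4.1})$, $(\ref{e4.3})$ and the unit-type content are straightforward, so the real work lies in the two ``pentagon-like'' conditions $(\ref{e4.2})$, $(\ref{e4.4})$ and the ``commutativity'' condition $(\ref{e4.5})$. Pairing against test elements of $H$ (and, where needed, $H^{*}$) and expanding the dual bases, $(\ref{e4.2})$ and $(\ref{e4.4})$ reduce respectively to $\D(h\tr a)=\b(h_{1(0)})\tr a_{1}\o\a(h_{1(1)})(\b^{-1}(h_{2})\tr\a^{-1}(a_{2}))$ and to $(hg)_{(0)}\o(hg)_{(1)}=\b(h_{1(0)})g_{(0)}\o\a(h_{1(1)})(\b^{-1}(h_{2})\tr\a^{-1}(g_{(1)}))$, while $(\ref{e4.5})$ reduces to $h_{2(0)}\o(h_{1}\tr a)\a^{2}(h_{2(1)})=h_{1(0)}\o\a^{2}(h_{1(1)})(h_{2}\tr a)$ --- each of these being precisely one of the conditions guaranteed by Theorem~\ref{T1.1}. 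I expect the only real obstacle to be bookkeeping: keeping the powers of $\a$ and $\b$ (and of $\b^{*}=\d^{-1}$) exactly aligned when transposing the dual-basis relations $\<e^{s},\cdot\>e_{s}$ back and forth between $H$ and $H^{*}$, so that the Hom-twists coming from $\widetilde{a}$, $\widetilde{l}$, $\widetilde{r}$ on $A\o H^{*}$ match on the nose; condition $(\ref{e4.4})$, which couples $\D(h\tr a)$ with the comodule-coalgebra structure of $H$, should be the most delicate piece of this computation.
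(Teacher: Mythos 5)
Your approach matches the paper's: the paper states this proposition without proof, offering it only as the dual of Proposition \ref{P4.4}, and your plan of dualizing that proof step by step --- verifying the two Hom-comodule-algebra structures and then reducing each matched-copair axiom of Definition \ref{D4.1} to one of the five conditions of Theorem \ref{T1.1} --- is exactly what is intended. One small transposition to fix when you carry it out: since the comultiplication of $H^{*}$ dualizes to the multiplication of $H$, it is condition (\ref{e4.2}) (applied to $u\in H^{*}$, whose $u_{[0]}$ lies in $H^{*}$) that reduces to the multiplicativity of the coaction, $(hg)_{(0)}\o (hg)_{(1)}=\b(h_{1(0)})g_{(0)}\o\a(h_{1(1)})(\b^{-1}(h_{2})\tr\a^{-1}(g_{(1)}))$, while condition (\ref{e4.4}) (applied to $a\in A$, whose $a_{(0)}=\b^{-1}(e_{s})\tr\a^{-2}(a)$ lies in $A$) reduces to the formula for $\D(h\tr a)$ --- the reverse of the pairing you wrote.
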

Therefore, we get the double crosscoproduct $(H^{*}\o A, \b^{*-1}\o \a )$ with the Hom-multiplication and the Hom-comultiplication given by
\begin{align*}
&(u\o a)(v\o b)=uv\o ab,
\\&\D(u\o a)=u_{1}\o \a (e_{s(1)})(\b^{-1}(e_{t})\tr \a^{-2}(a_{1}))\o \<u_{2}, \b^{2}(e_{s(0)})\>e^{s}e^{t}\o a_{2},
\end{align*}
for any $u,v\in H^{*}$ and $a,b\in A$.
\begin{corollary}\label{C4.7}
Let $(H,\b)$ be a monoidal Hom-Hopf algebra with bijective antipode $S$, then we have the Drinfel'd codouble $\widehat{T(H)}=(H^{*}\bo H^{op},\b^{*-1}\o \b)$ with the tensor Hom-multiplication and the Hom-comultiplication given by
\begin{align*}
\D(u\o h)&=u_{1}\o (\b^{-1}(e_{t})\tr \b^{-2}(h_{1}))\b (e_{s(1)})\o \<u_{2}, \b^{2}(e_{s(0)})\>e^{s}\bu e^{t}\o h_{2}
\\&=u_{1}\o [(\b^{n-1}(e_{t2})\b^{-3}(h_{1}))\b^{n }S^{-1}(e_{t1})][\b^{n+1 }(e_{s22})\b^{n }S^{-1}(e_{s1})]
\\&\quad \o \<u_{2}, \b^{3}(e_{s21})\>e^{s}\bu e^{t}(k)\o h_{2}
\\&=u_{1}\o [(\b^{n-1}(k_{22})\b^{-3}(h_{1}))\b^{n }S^{-1}(k_{21})][\b^{n+1 }(k_{122})\b^{n }S^{-1}(k_{11})]
\\&\quad\o \<u_{2}, \b^{3}(k_{121})\>\o h_{2}
\\&=u_{1}\o [(\b^{n-1}(k_{22})\b^{-3}(h_{1}))\b^{n+1 }S^{-1}(k_{212})][\b^{n+1 }(k_{211})\b^{n }S^{-1}(k_{11})]
\\&\quad\o \<u_{2}, \b^{2}(k_{12 })\>\o h_{2}
\\&=u_{1}\o [\b^{n }(k_{22})\b^{-2}(h_{1})][(\b^{n  }S^{-1}(k_{212})\b^{n  }(k_{211}))\b^{n }S^{-1}(k_{11})]
\\&\quad\o \<u_{2}, \b^{2}(k_{12 })\>\o h_{2}
\\&=u_{1}\o [\b^{n-1}(k_{2})\b^{-2}(h_{1})]\b^{n+1}S^{-1}(k_{11})\o \<u_{2}, \b^{2}(k_{12 })\>\o h_{2}
\\&=u_{1}\o (\b^{n-1}(e_{t })\b^{-2}(h_{1}))\b^{n+1}S^{-1}(e_{s})\o  (e^{s}\bu \b^{*2}(u_{2}))\bu e^{t}\o h_{2},
\end{align*}
for any $h\in H$ and $u\in H^{*}$.
\end{corollary}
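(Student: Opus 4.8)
The plan is to obtain this corollary by specializing the double crosscoproduct construction, not by re-checking the matched-copair axioms. Take the left bicrossproduct $(H^{op}\bo H,\b\o\b)$ built in Section 3 from the left action $h\tr x=(\b^{n}(h_{2})\b^{-1}(x))\b^{n+1}S^{-1}(h_{1})$ of $(H,\b)$ on $(H^{op},\b)$ and the right coaction $h\mapsto\b(h_{21})\o\b^{n}(h_{22})\b^{n-1}S^{-1}(h_{1})$ of $(H^{op},\b)$ on $(H,\b)$. Applying the Proposition just preceding this corollary with $A=H^{op}$ and $\a=\b$ shows that $(H^{*},\b^{*-1})$ and $(H^{op},\b)$ form a matched copair, so the double crosscoproduct theorem already yields that $\widehat{T(H)}=(H^{*}\bo H^{op},\b^{*-1}\o\b)$ is a monoidal Hom-Hopf algebra, with the stated tensor multiplication by construction. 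What remains is purely the closed form of $\D(u\o h)$, i.e. the displayed chain of equalities.

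First I would start from the expression $\D(u\o a)=u_{1}\o\a(e_{s(1)})(\b^{-1}(e_{t})\tr\a^{-2}(a_{1}))\o\<u_{2},\b^{2}(e_{s(0)})\>e^{s}\bu e^{t}\o a_{2}$ recorded immediately above the corollary, set $A=H^{op}$, $\a=\b$, $a=h$, and note that the product $\a(e_{s(1)})(\b^{-1}(e_{t})\tr\b^{-2}(h_{1}))$ is taken in $H^{op}$, hence equals $(\b^{-1}(e_{t})\tr\b^{-2}(h_{1}))\b(e_{s(1)})$ in $H$; this is the first displayed line. Next I would substitute the explicit coaction $e_{s(0)}\o e_{s(1)}=\b(e_{s21})\o\b^{n}(e_{s22})\b^{n-1}S^{-1}(e_{s1})$ into $\b(e_{s(1)})$ and into the pairing, and the explicit action into $\b^{-1}(e_{t})\tr\b^{-2}(h_{1})$, using $\D(\b(c))=\b(c_{1})\o\b(c_{2})$ to pull the structure maps off the Sweedler legs; this yields the second displayed line. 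Then I would collapse the dual-basis sums, merging $\{e_{s}\}$ and $\{e_{t}\}$ into a single dummy element $k$, so that $e^{s}\bu e^{t}$ becomes a single basis functional and the iterated coproducts of $e_{s}$ and $e_{t}$ become iterated coproducts of $k$; this produces the third line, after which everything is internal to $H$.

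The heart of the proof is the telescoping running from the third line to the penultimate one. Each equality is forced by exactly one structural rule: Hom-coassociativity in its shifted form $\ld^{-1}(c_{1})\o\D(c_{2})=\D(c_{1})\o\ld^{-1}(c_{2})$ and its iterates, to regroup the nested subscripts of $k$ and of $h$; $\D\circ\b=(\b\o\b)\circ\D$, $\v\circ\b=\v$, and $\b(ab)=\b(a)\b(b)$ to slide powers of $\b$ past products; Hom-associativity $\b(a)(bc)=(ab)\b(c)$ together with $a1_{H}=\b(a)$ to reassociate and absorb units; and the antipode identities $S^{-1}(c_{2})c_{1}=\v(c)1_{H}=c_{2}S^{-1}(c_{1})$ in $H$ (equivalently the antipode axiom for $H^{op}$) to annihilate one factor at a time. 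For example a block of the shape $\b^{m}S^{-1}(c_{2})\b^{m}(c_{1})$ collapses to $\v(c)1_{H}$ after the correct reassociation, shortening the Sweedler string of $k$ and pushing a leftover $\b$ onto a neighbour; the remaining blocks contract by the same mechanism. After all cancellations the middle factors reduce to $(\b^{n-1}(e_{t})\b^{-2}(h_{1}))\b^{n+1}S^{-1}(e_{s})$, and re-expanding the dual bases the $H^{*}$-factor reassembles as $(e^{s}\bu\b^{*2}(u_{2}))\bu e^{t}$, the surplus powers of $\b$ in the pairing being absorbed onto $u_{2}$ via $\d^{-1}=\b^{*}$ and $\<u,\b^{j}(h)\>=\<\b^{*j}(u),h\>$. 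This is the last line.

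The step I expect to be the main obstacle is the bookkeeping, not any single identity: every rewriting hinges on keeping the nested Sweedler subscripts of $k$ and $h$ and the exponents of $\b$ exactly synchronized, and an off-by-one in either place destroys the telescoping; one must also be careful to transcribe the opposite multiplication of $H^{op}$ with the factors in the correct order. Once the right reindexing is fixed at each stage every equality becomes automatic, and no idea beyond the double crosscoproduct theorem and the dual-basis dictionary between $H$ and $H^{*}$ is required. The coquasitriangular structure on $\widehat{T(H)}$ is established separately in Section 5 and lies outside this corollary.
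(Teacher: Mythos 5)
Your proposal is correct and follows essentially the same route as the paper: the paper also obtains Corollary \ref{C4.7} by specializing the double crosscoproduct formula $\D(u\o a)=u_{1}\o \a(e_{s(1)})(\b^{-1}(e_{t})\tr\a^{-2}(a_{1}))\o\<u_{2},\b^{2}(e_{s(0)})\>e^{s}\bu e^{t}\o a_{2}$ to $A=H^{op}$, $\a=\b$, substituting the explicit action and coaction from Section 3, merging the dual bases into a single dummy $k$, and telescoping with Hom-(co)associativity and the antipode identities before re-expanding the dual bases and absorbing the surplus $\b$-powers onto $u_{2}$ via $\b^{*}$. The displayed chain of equalities in the corollary is precisely the paper's own derivation, so nothing further is needed.
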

\section{The Drinfel'd codouble versus the Heisenberg double over monoidal Hom-Hopf algebras}
\def\theequation{5.\arabic{equation}}
\setcounter{equation} {0}
\begin{definition}\label{D5.1}
Let $(H,\b)$ be a monoidal Hom-Hopf algebra. If there exists a convolution invertible bilinear form $\z:H\o H\lr \B$, such that for any $h,g,k\in H$,
\begin{align}
&\z(\b(h),\b(g))=\z(h,g);\label{e5.1}
\\&\z(h_{1},g_{1})g_{2}h_{2}=h_{1}g_{1}\z(h_{2},g_{2});\label{e5.2}
\\&\z(\b^{-1}(h),gk)=\z(h_{1},\b(g))\z(h_{2},\b(k));\label{e5.3}
\\&\z(hg,\b^{-1}(k))=\z(\b(h),k_{2})\z(\b(g),k_{1}),\label{e5.4}
\end{align}
then $\z$ is called a coquasitriangular form of $H$, and $(H,\b,\z)$ is called a coquasitriangular monoidal Hom-Hopf algebra.
\end{definition}
\begin{proposition}\label{P5.2}
Let $(H,\b)$ be a monoidal Hom-Hopf algebra with bijective antipode $S$, then the Drinfel'd codouble $T(H)=(H^{op}\bo H^{*},\b\o \b^{*-1})$ has a coquasitriangular structure given by
\begin{align*}
&\z (h\o p,g\o q)=\<q,\b^{-n}(h)\>p(1_{H})\v(g),
\end{align*}
and its inverse is given by
\begin{align*}
\z^{-1} (h\o p,g\o q)=\<S^{*}(q),\b^{-n}(h)\>p(1_{H})\v(g),
\end{align*}
for any $h,g\in H$ and $p,q \in H^{*}$.
\end{proposition}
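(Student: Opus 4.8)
The plan is to verify directly that $\z$ satisfies the four axioms (\ref{e5.1})--(\ref{e5.4}) of Definition \ref{D5.1} for the monoidal Hom-Hopf algebra $T(H) = (H^{op}\bo H^{*},\b\o\b^{*-1})$, and then to check that $\z^{-1}$ as defined is genuinely the convolution inverse of $\z$. Since $T(H)$ is a double crosscoproduct, its Hom-multiplication is the tensor one, $(h\o p)(g\o q) = hg\o pq$ (computed in $H^{op}$ and in $H^{*}$ respectively), while its Hom-comultiplication is the nontrivial one recorded at the end of Corollary \ref{C4.5}, namely
\[
\D(h\o u) = h_{1}\o e^{t}\bu(\b^{*2}(u_{1})\bu e^{s})\o \b^{n+1}S^{-1}(e_{s})(\b^{-2}(h_{2})\b^{n-1}(e_{t}))\o u_{2}.
\]
The structure map is $\b\o\b^{*-1}$, where $\b^{*-1} = \d$ acts by $\d(u)(h) = u(\b^{-1}(h))$. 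I would first observe that $\z$ factors very simply: it is nonzero only when evaluated against the "grouplike-type" data, picking out $\v(g)$ in the second leg and $p(1_H)$ in the first, so many Sweedler terms collapse. This is the key structural simplification that makes all four verifications tractable.

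First I would check (\ref{e5.1}): $\z(\b(h)\o\d(p),\b(g)\o\d(q)) = \z(h\o p,g\o q)$. Using $\d(q)(\b^{-n}\b(h)) = q(\b^{-1}\b^{-n}\b(h)) = q(\b^{-n}(h))$, together with $\d(p)(1_H) = p(\b^{-1}(1_H)) = p(1_H)$ and $\v(\b(g)) = \v(g)$, this is immediate. Next, for the intertwining axiom (\ref{e5.2}), I would expand both sides using the comultiplication formula above; because the second argument in $\z$ only sees a counit, on the right-hand side the term $\v$ applied to the relevant leg of $\D$ forces a contraction of the dual bases $\{e_s\},\{e^s\}$ and $\{e_t\},\{e^t\}$, and the left-hand side collapses similarly. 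The claim then reduces to an identity about $S^{-1}$, $\b$, and the multiplication in $H^{op}$ that follows from the Hom-antipode axiom and the compatibility $\b\circ S = S\circ\b$. Axioms (\ref{e5.3}) and (\ref{e5.4}) are the multiplicativity-of-$\z$-in-each-variable conditions; here I would use that the product in $T(H)$ is the tensor product, so $\z(h g\o p q,\ \b^{-1}(k)\o\b^{-1}(r)) $ etc. splits, and the bilinear pairing $\<-,-\>$ between $H$ and $H^{*}$ together with $\D_{H^{*}}(q)(x\o y) = q_1(x)q_2(y)$ converts the convolution product in $H^{*}$ into the coproduct of $H$, matching the right-hand sides after accounting for the powers of $\b$ (this is where the exponent $\b^{-n}$ in the definition of $\z$ is calibrated).

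Finally, for the inverse, I would compute the convolution product $\z * \z^{-1}$ and $\z^{-1}*\z$ on $(h\o p)\o(g\o q)$ using $\D_{T(H)}$ in each slot; after the dual-basis contractions forced by the counit legs, the nontrivial content is the identity $\<q_1, x\>\<S^{*}(q_2), y\> = \v_{H^{*}}(q)\,\langle\text{id}\rangle$-type collapse, i.e. $q_1\c S^{*}(q_2) = \v_{H^*}(q)1_{H^*} = q(1_H)\v$, which is exactly the antipode axiom for $(H^{*},\b^{*-1})$ recorded in the preliminaries. I expect the main obstacle to be bookkeeping the powers of $\b$ and $\b^{*-1}=\d$ through the twisted coproduct of Corollary \ref{C4.5}: one must be careful that the shifts introduced by $\b^{n+1}S^{-1}$, $\b^{-2}$, and $\b^{n-1}$ in the third tensor leg, combined with $\b^{*2}$ on $u_1$, cancel correctly against the $\b^{-n}$ in $\z$, since a single misplaced power of $\b$ breaks (\ref{e5.1}) and (\ref{e5.3})--(\ref{e5.4}). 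Once the correct normalization is confirmed on a careful first pass, the remaining algebra is routine.
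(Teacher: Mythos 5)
Your proposal follows essentially the same route as the paper's proof: a direct verification of the four axioms of Definition \ref{D5.1} using the explicit comultiplication of Corollary \ref{C4.5}, with the counit and evaluation-at-$1_{H}$ legs of $\z$ collapsing the dual-basis sums and the Hom-antipode axiom supplying both the key cancellation in (\ref{e5.2}) and the convolution-inverse identity. If anything you cover slightly more ground, since the paper leaves the invertibility of $\z$ and the relations (\ref{e5.1}) and (\ref{e5.4}) to the reader.
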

\begin{proof}
It is easy to prove that $\z$ is invertible.
We only need to check the four relations in Definition \ref{D5.1}. For any $h\o u, g\o v, k\o w \in T(H)$. Firstly, It is simple to check that
\begin{align*}
&\z(\b(h)\o \b^{*-1 }(u),\b(g)\o \b^{*-1 }(v) )=\z (h\o u,g\o v),
\end{align*}
we get the relation (\ref{e5.1}). And by
\begin{align*}
\z ((h\o& u)_{1},(g\o v)_{1})(g\o v)_{2}(h\o u)_{2}
\\=&\z (h_{1}\o e^{t}\bu(\b^{*2}(u_{1})\bu e^{s}),g_{1}\o e^{j}\bu (\b^{*2}(v_{1})\bu e^{i}))
\\\quad &(\b^{n+1}S^{-1}(e_{i})(\b^{-2}(g_{2})\b^{n-1}(e_{j}))\o v_{2})(\b^{n+1}S^{-1}(e_{s})(\b^{-2}(h_{2})\b^{n-1}(e_{t}))\o u_{2})
\\=&\<e^{j}\bu(\b^{*2}(v_{1})\bu e^{i}),\b^{-n}(h_{1})\>\v(g_{1})(e^{t}\bu(\b^{*2}(u_{1})\bu e^{s}))(1_{H})
\\\quad &[\b^{n+1}S^{-1}(e_{s})(\b^{-2}(h_{2})\b^{n-1}(e_{t}))][\b^{n+1}S^{-1}(e_{i})(\b^{-2}(g_{2})\b^{n-1}(e_{j}))]\o v_{2}u_{2}
\\=&\<v_{1},\b^{-n+2}(h_{121})\>h_{2}[\b S^{-1}(h_{122})(\b^{-3}(g)\b^{ -1}(h_{11}))]\o v_{2}\d^{-1}(u)
\\=&\<v_{1},\b^{-n+1}(h_{21})\>\b^{2}(h_{222})[\b S^{-1}(h_{221})(\b^{-3}(g)\b^{ -2}(h_{1}))]\o v_{2}\d^{-1}(u)
\\=&\<v_{1},\b^{-n+1}(h_{21})\>(\b(h_{222})\b S^{-1}(h_{221}))(\b^{-2}(g)\b^{ -1}(h_{1}))\o v_{2}\d^{-1}(u)
\\=&\<v_{1},\b^{-n}(h_{2})\>\b^{-1}(g)h_{1}\o v_{2}\d^{-1}(u)
\end{align*}
Evaluating the expression above against the tensor $H\o \<\c, x\>$ $(x\in H)$
\begin{align*}
=&\<v_{1},\b^{-n}(h_{2})\>\<v_{2},x_{1}\>\<u,\b(x_{2})>\b^{-1}(g)h_{1}\qquad\qquad\qquad\qquad\qquad\qquad
\\=&\<v,\b^{-n}(h_{2})x_{1}\>\<u,\b(x_{2})>\b^{-1}(g)h_{1}.
\end{align*}
By computing the other side of the relation, we have
\begin{align*}
(h\o& u)_{1}(g\o v)_{1}\z ((h\o u)_{2},(g\o v)_{2})
\\=&(h_{1}\o e^{j}\bu(\b^{*2}(u_{1})\bu e^{i}))(g_{1}\o e^{t}\bu(\b^{*2}(v_{1})\bu e^{s}))
\\\quad &\z (\b^{n+1}S^{-1}(e_{i})(\b^{-2}(h_{2})\b^{n-1}(e_{j}))\o u_{2},\b^{n+1}S^{-1}(e_{s})(\b^{-2}(g_{2})\b^{n-1}(e_{t}))\o v_{2})
\\=&\<v_{2},\b S^{-1}(e_{i})(\b^{-n-2}(h_{2})\b^{ -1}(e_{j}))\>\v(\b^{n+1}S^{-1}(e_{s})(\b^{-2}(g_{2})\b^{n-1}(e_{t})))u_{2}(1_{H})
\\\quad&g_{1}h_{1}\o [e^{j}\bu(\b^{*2}(u_{1})\bu e^{i})][e^{t}\bu (\b^{*2}(v_{1})\bu e^{s})]
\\=&\<v_{2},\b S^{-1}(e_{i})(\b^{-n-2}(h_{2})\b^{ -1}(e_{j}))\>\v(\b^{n+1}S^{-1}(e_{s})(\b^{-2}(g_{2})\b^{n-1}(e_{t})))
\\\quad&g_{1}h_{1}\o [e^{j}\bu(\b^{*3}(u)\bu e^{i})][e^{t}\bu(\b^{*2}(v_{1})\bu e^{s})]
\end{align*}
Evaluating the expression above against the tensor $H\o \<\c, x\>$ $(x\in H)$
\begin{align*}
=&\<v_{2},\b S^{-1}(x_{122})(\b^{-n-2}(h_{2})\b^{ -1}(x_{11}))\>\v(\b^{n+1}S^{-1}(x_{222})(\b^{-2}(g_{2})\b^{n-1}(x_{21})))
\\\quad&g_{1}h_{1}\o u(\b^{3}(x121))v_{1}(\b^{2}(x_{221}))
\\=&\<v,x_{2}[\b S^{-1}(x_{122})(\b^{-n-2}(h_{2})\b^{ -1}(x_{11}))]\>\<u,\b^{3}(x_{121})\>\b^{-1}(g)h_{1}
\\=&\<v,\b^{2}(x_{222})[\b S^{-1}(x_{221})(\b^{-n-2}(h_{2})\b^{ -2}(x_{1}))]\>\<u,\b^{2}(x_{21})\>\b^{-1}(g)h_{1}
\\=&\<v,(\b(x_{222})\b S^{-1}(x_{221}))(\b^{-n-1}(h_{2})\b^{ -1}(x_{1}))\>\<u,\b^{2}(x_{21})\>\b^{-1}(g)h_{1}
\\=&\<v,\b^{-n}(h_{2})x_{1}\>\<u,\b (x_{2})\>\b^{-1}(g)h_{1},
\end{align*}
we easily find that the above two expressions are equal, so the relation (\ref{e5.2}) holds. Finally, we only prove the relation $(\ref{e5.3})$, the relation $(\ref{e5.4})$ can be obtained by a similar computation.
\begin{align*}
\z(\b^{-1}(h)&\o \b^{*}(u),(g\o v)(k\o w))
\\=&\z(\b^{-1}(h)\o \b^{*}(u),kg\o vw)
\\=&\<vw,\b^{-n-1}(h)\>\b^{*}(u)(1_{H})\v(kg)
\\=&\<v,\b^{-n-1}(h_{1})\>\v(g)\<w,\b^{-n-1}(h_{2})\>u(1_{H})\v(k)
\\=&\<v,\b^{-n-1}(h_{1})\>(e^{t}\bu (\b^{*2}(u_{1})\bu e^{s}))(1_{H})\v(g)
\\\quad &\<w,S^{-1}(e_{s})(\b^{-n-3}(h_{2})\b^{ -2}(e_{t}))\>u_{2}(1_{H})\v(k)
\\=&\z(h_{1}\o e^{t}\bu(\b^{*2}(u_{1})\bu e^{s}),\b(g)\o \b^{-1*}(v))
\\\quad &\z(\b^{n+1}S^{-1}(e_{s})(\b^{-2}(h_{2})\b^{n-1}(e_{t}))\o u_{2},\b(k)\o \b^{*-1}(w))
\\=&\z((h\o u)_{1},\b(g)\o \b^{*-1}(v))\z((h\o u)_{2},\b(k)\o \b^{*-1}(w)).
\end{align*}
This completes the proof.
\end{proof}
Similar to the result of Proposition \ref{P5.2}, we have the following conclusion.
\begin{proposition}
Let $(H,\b)$ be a monoidal Hom-Hopf algebra with bijective antipode $S$, then the Drinfel'd codouble $\widehat{T(H)}=(H^{*}\bo H^{op},\b^{*-1}\o \b)$ has a coquasitriangular structure given by
\begin{align*}
&\z (u\o h,v\o k)=\<v,\b^{-n}(h)\>u(1_{H})\v(k),
\end{align*}
and its inverse is given by
\begin{align*}
\z^{-1} (u\o h,v\o k)=\<S^{*}(v),\b^{-n}(h)\>u(1_{H})\v(k),
\end{align*}
for any $h,g\in H$ and $u,v \in H^{*}$.
\end{proposition}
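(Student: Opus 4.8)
The argument is parallel to the proof of Proposition~\ref{P5.2}, exploiting the left--right symmetry between the two codoubles. First I would check that $\z$ is convolution invertible with the displayed $\z^{-1}$, i.e. that $\z*\z^{-1}=\v\o\v=\z^{-1}*\z$ in the convolution algebra on $\widehat{T(H)}\o\widehat{T(H)}$; this is a direct computation from the Hom-comultiplication of $\widehat{T(H)}$ given in Corollary~\ref{C4.7}, namely
\begin{align*}
\D(u\o h)=u_{1}\o (\b^{n-1}(e_{t})\b^{-2}(h_{1}))\b^{n+1}S^{-1}(e_{s})\o (e^{s}\bu \b^{*2}(u_{2}))\bu e^{t}\o h_{2},
\end{align*}
together with $(u\o h)(v\o k)=uv\o hk$, the duality $\<e^{s},x\>e_{s}=x$, and the Hom-antipode identities of $H$ and of $(H^{*},\b^{*-1})$.

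Next I would verify the four conditions of Definition~\ref{D5.1}. Axiom~(\ref{e5.1}) is immediate: the structure map of $\widehat{T(H)}$ is $\b^{*-1}\o\b$, where $\b^{*-1}=\d$ satisfies $\d(v)(x)=v(\b^{-1}(x))$, hence $\d(v)(1_{H})=v(1_{H})$, while $\v\circ\b=\v$; substituting into $\z$,
\begin{align*}
\z\big(\b^{*-1}(u)\o\b(h),\,\b^{*-1}(v)\o\b(k)\big)=\<\d(v),\b^{-n+1}(h)\>u(1_{H})\v(k)=\<v,\b^{-n}(h)\>u(1_{H})\v(k).
\end{align*}
For the quasi-cocommutativity relation~(\ref{e5.2}), with $X=u\o h$ and $Y=v\o k$ I would expand $\z(X_{1},Y_{1})\,Y_{2}X_{2}$ and $X_{1}Y_{1}\,\z(X_{2},Y_{2})$ using the above Hom-comultiplication and the tensor Hom-multiplication, then, exactly as in the proof of Proposition~\ref{P5.2}, pair the $H^{*}$-component of the two resulting elements of $H^{*}\o H^{op}$ against a test element $x\in H$. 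On each side the paired factor $\<v,-\>$ forces a collapse of the nested $S^{-1}$-expression via the Hom-antipode axioms $S^{-1}(h_{2})h_{1}=\v(h)1_{H}=h_{2}S^{-1}(h_{1})$ (used in $H^{op}$), Hom-associativity $\b(a)(bc)=(ab)\b(c)$, and Hom-coassociativity $\b^{-1}(h_{1})\o\D(h_{2})=\D(h_{1})\o\b^{-1}(h_{2})$, and both expressions reduce to the same element, of the shape $\<v,\b^{-n}(h_{2})x_{1}\>\<u,\b(x_{2})\>\b^{-1}(k)h_{1}$.

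Finally, for the hexagon axioms~(\ref{e5.3}) and~(\ref{e5.4}) I would argue as at the end of the proof of Proposition~\ref{P5.2}. Using $(v\o k)(w\o l)=vw\o kl$, $\b^{-1}(u\o h)=\b^{*}(u)\o\b^{-1}(h)$ and $\b^{*}(u)(1_{H})=u(1_{H})$, the left side of~(\ref{e5.3}) equals $\<vw,\b^{-n-1}(h)\>u(1_{H})\v(kl)$, which splits by multiplicativity of $\v_{H}$ and the definition of the product $\bu$ on $H^{*}$ into $\<v,\b^{-n-1}(h_{1})\>\v(k)\<w,\b^{-n-1}(h_{2})\>u(1_{H})\v(l)$; expanding $\z(X_{1},\b(Y))\,\z(X_{2},\b(Z))$ from the Hom-comultiplication of $\widehat{T(H)}$ gives the same expression, and~(\ref{e5.4}) follows by the mirror-image manipulation with the two tensor factors interchanged. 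The main obstacle is~(\ref{e5.2}): the coproduct of $\widehat{T(H)}$ carries a nested $S^{-1}$-term with several powers of $\b$, so matching $Y_{2}X_{2}$ against $X_{1}Y_{1}$ demands the same careful multi-line reduction --- and the device of testing against $x\in H$ --- used for $T(H)$ in Proposition~\ref{P5.2}; the remaining verifications are routine and essentially dual to computations already carried out.
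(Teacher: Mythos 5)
Your proposal matches the paper's approach exactly: the paper gives no separate proof for $\widehat{T(H)}$ and simply invokes the same computation as for $T(H)$ in Proposition~\ref{P5.2}, which is precisely the mirror-image argument you outline (convolution invertibility, the four axioms of Definition~\ref{D5.1}, and the device of pairing against a test element $x\in H$ for the quasi-cocommutativity relation). The only point to watch is that the second tensor factor of $\widehat{T(H)}$ is $H^{op}$, so the product you write as $uv\o hk$ must be read with $hk$ taken in $H^{op}$ (i.e.\ $kh$ in $H$), as your final reduced expression $\b^{-1}(k)h_{1}$ already correctly reflects.
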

\begin{definition}
Let $(H,\b)$ be a monoidal Hom-Hopf algebra. If the linear map $\si: H\o H\lr \B$ such that the following conditions hold:
\begin{align}
&\si(\b(h), \b(g))=\si(h,g),\label{e5.5}
\\&\si(h_{1},g_{1})\si(h_{2}g_{2}, k)=\si(g_{1},k_{1})\si(h,g_{2}k_{2}),\label{e5.6}
\end{align}
for any $h,g,k\in H$, then $\si$ is called a left monoidal Hom-$2$-cocycle.
\end{definition}
Similarly, if the condition $(\ref{e5.6})$ is replaced by
\begin{align*}
\si(h_{1}g_{1},k)\si(h_{2},g_{2} )=\si(h,g_{1}k_{1})\si(g_{2},k_{2}),
\end{align*}
then $\si$ is called a right monoidal Hom-2-cocycle.

Furthermore, $\si$ is normal if $\si(1,h)=\si (h,1)=\v(h)$.
\begin{proposition}
Let $(H,\b, \z)$ be a coquasitriangular monoidal Hom-Hopf algebra. Then
\\$(1)$ $\si=\z\circ \t :H\o H\lr \B$ is a left monoidal Hom-$2$-cocycle.
\\$(2)$ $\si=\z:H\o H\lr \B$ is a right monoidal Hom-$2$-cocycle.
\end{proposition}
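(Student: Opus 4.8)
The plan is to reduce both parts to two \emph{twist-free} leg-splitting identities for $\z$ together with a quantum Yang--Baxter-type identity for $\z$, all of which I would first extract from the coquasitriangular axioms $(\ref{e5.1})$--$(\ref{e5.4})$.

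First I would record the normalized hexagon identities. Replacing $h$ by $\b(h)$ in $(\ref{e5.3})$ and using $\D(\b(h))=\b(h_{1})\o\b(h_{2})$ gives $\z(h,gk)=\z(\b(h_{1}),\b(g))\z(\b(h_{2}),\b(k))$; now $(\ref{e5.1})$ absorbs every $\b$, so that
\[
\z(h,gk)=\z(h_{1},g)\z(h_{2},k).
\]
Symmetrically, replacing $k$ by $\b(k)$ in $(\ref{e5.4})$ and applying $(\ref{e5.1})$ yields $\z(hg,k)=\z(h,k_{2})\z(g,k_{1})$. Next I would derive the Yang--Baxter identity: the relation $(\ref{e5.2})$ is an equality of elements of $H$, so evaluating both sides against the functional $x\m\z(x,k)$ (with $k\in H$ fixed) and then applying the right leg-splitting identity to $\z(g_{2}h_{2},k)$ and to $\z(h_{1}g_{1},k)$ gives, after commuting the scalar factors,
\[
\z(h_{1},g_{1})\z(h_{2},k_{1})\z(g_{2},k_{2})=\z(g_{1},k_{1})\z(h_{1},k_{2})\z(h_{2},g_{2}).
\]

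With these three identities the verification is formal. Condition $(\ref{e5.5})$ is immediate: for part $(2)$ it is literally $(\ref{e5.1})$, and for part $(1)$, $(\z\circ\t)(\b(h),\b(g))=\z(\b(g),\b(h))=\z(g,h)=(\z\circ\t)(h,g)$. For part $(1)$ write $\si=\z\circ\t$, so $\si(a,b)=\z(b,a)$; expanding the left-hand side of $(\ref{e5.6})$ with the left leg-splitting identity gives $\z(g_{1},h_{1})\z(k_{1},h_{2})\z(k_{2},g_{2})$, while expanding the right-hand side with the right leg-splitting identity gives $\z(k_{1},g_{1})\z(g_{2},h_{2})\z(k_{2},h_{1})$; the two agree after commuting scalars and using the Yang--Baxter identity with $(h,g,k)$ replaced by $(k,g,h)$. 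For part $(2)$, with $\si=\z$ the right cocycle condition reads $\z(h_{1}g_{1},k)\z(h_{2},g_{2})=\z(h,g_{1}k_{1})\z(g_{2},k_{2})$; the right leg-splitting identity turns the left-hand side into $\z(g_{1},k_{1})\z(h_{1},k_{2})\z(h_{2},g_{2})$ and the left leg-splitting identity turns the right-hand side into $\z(h_{1},g_{1})\z(h_{2},k_{1})\z(g_{2},k_{2})$, which are exactly the two sides of the Yang--Baxter identity. (Part $(2)$ can also be argued directly without Yang--Baxter, by rewriting the right-hand side as $\z(h_{1},g_{1})\z(g_{2}h_{2},k)$ via both leg-splitting identities and then feeding $(\ref{e5.2})$ into $x\m\z(x,k)$.)

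The step I expect to be the main obstacle is the careful control of the Hom-automorphism twists: one must ensure that no double coproduct is ever opened, so that the twisted coassociativity $\b^{-1}(c_{1})\o\D(c_{2})=\D(c_{1})\o\b^{-1}(c_{2})$ does not interfere, and that each $\b$ created by $(\ref{e5.3})$ or $(\ref{e5.4})$ sits on the \emph{whole} argument of $\z$ so that $(\ref{e5.1})$ can cancel it; one also has to observe that evaluating the $H$-valued identity $(\ref{e5.2})$ against the fixed functional $\z(-,k)$ is legitimate by bilinearity of $\z$. Once these bookkeeping points are dispatched, everything that remains is reordering of commuting scalars.
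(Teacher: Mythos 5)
Your proposal is correct and follows essentially the same route as the paper: both arguments rest on normalizing the hexagon axioms (\ref{e5.3})--(\ref{e5.4}) via (\ref{e5.1}) into twist-free leg-splitting identities and on pairing the element identity (\ref{e5.2}) with $\z$ in one slot, the only cosmetic difference being that the paper does this inline (evaluating (\ref{e5.2}) against $\z(k,-)$ rather than $\z(-,k)$) while you extract an explicit Yang--Baxter lemma and reuse it for both parts. The bookkeeping concerns you flag are resolved exactly as you anticipate, since only single coproducts of $h,g,k$ ever occur.
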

\begin{proof}
(1) It is obvious to obtain Eq. (\ref{e5.5}), we only need to check Eq. (\ref{e5.6}). For any $h,g, k\in H$, we have
\begin{align*}
\si(h_{1},g_{1})\si(h_{2}g_{2}, k)=&\z(g_{1},h_{1})\z(k,h_{2}g_{2})
\\=&\z(k,\z(g_{1},h_{1})h_{2}g_{2})
\\=&\z(k,g_{1}h_{1}\z(g_{2},h_{2}))
\\=&\z(\b(k_{1}),\b(g_{1}))\z(\b(k_{2}),\b(h_{1}))\z(g_{2},h_{2})\quad by ~(\ref{e5.3})
\\=&\z(k_{1},g_{1})\z(k_{2},h_{1})\z(g_{2},h_{2})\quad by ~(\ref{e5.1})
\\=&\z(k_{1},g_{1})\z(g_{2}k_{2},h)
\\=&\si(g_{1},k_{1})\z(h,g_{2}k_{2}).
\end{align*}
(2) Straightforward.

This completes the proof.
\end{proof}

\begin{example}
$(1)$ It follows from Corollary $\ref{C4.5}$ that the Drinfel'd codouble $T(H)$ is the vector space $H^{op}\o H^{*}$ with the Hom-multiplication and the Hom-comultiplication given by
\begin{align*}
&(h\o p)(g\o q)=gh\o pq,
\\&\D(h\o p)=h_{1}\o e^{t}\bu (\b^{*2}(p_{1})\bu e^{s})\o\b^{n+1}S^{-1}(e_{s})(\b^{-2}(h_{2})\b^{n-1}(e_{t}))\o p_{2},
\end{align*}
for any $h,g\in H$ and $p,q\in H^{*}$.

If we define $\si=\z\circ \t:T(H)\o T(H)\lr \B$ by
\begin{align}\label{e5.7}
\si(h\o p,g\o q)=\<p,\b^{-n}(g)\>q(1_{H})\v(h).
\end{align}
Then $\si$ is a left monoidal Hom-$2$-cocycle on $T(H)$.

$(2)$ It follows from Corollary $\ref{C4.7}$ that the Drinfel'd codouble $\widehat{T(H)}$ is the vector space $H^{*}\o H^{op}$ with the Hom-multiplication and the Hom-comultiplication given by
\begin{align*}
&(u\o h)(v\o k)=uv\o kh,
\\&\D(u\o h)=u_{1}\o (\b^{n-1}(e_{t })\b^{-2}(h_{1}))\b^{n+1}S^{-1}(e_{s})\o  (e^{s}\bu (\b^{2})^{*}(u_{2}))\bu e^{t}\o h_{2},
\end{align*}
for any $h,k\in H$ and $u,v\in H^{*}$.

If we define $\si=\z:\widehat{T(H)}\o \widehat{T(H)}\lr \B$ by
\begin{align}\label{e5.8}
\si(u\o h,v\o k)=\<v,\b^{-n}(h)\>u(1_{H})\v(k).
\end{align}
Then $\si$ is a right monoidal Hom-$2$-cocycle on $\widehat{T(H)}$.
\end{example}
\begin{proposition}
Let $(H,\b)$ be a monoidal Hom-Hopf algebra.
\\$(1)$ If $\si$ is a normal left monoidal Hom-$2$-cocycle, define new Hom-multiplication on $H$ as follows
\begin{align*}
h~ \!_{\si}\c g=\si (h_{1},g_{1})\b(h_{2}g_{2}),
\end{align*}
for any $h,g\in H$. Then $(H,\!_{\si}\c, \b)$ is a monoidal Hom-algebra, called the left twist of $H$ (denoted by $\!_{\si}H$).
\\$(2)$ If $\si$ is a normal right monoidal Hom-$2$-cocycle, define new multiplication on $H$ as follows
\begin{align*}
h \c_{\si} g=\b(h_{1}g_{1})\si (h_{2},g_{2}),
\end{align*}
for any $h,g\in H$. Then $(H,\c_{\si}, \b)$ is a monoidal Hom-algebra, called the right twist of $H$ (denoted by $H_{\si}$).
\end{proposition}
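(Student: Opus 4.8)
The plan is to verify directly that $(H,{}_{\si}\c,\b)$ satisfies the axioms of a unital monoidal Hom-algebra recalled in Section~1: the map $\b$ is already a bijection, so what remains is (i) that $1_{H}$ is a two-sided unit for ${}_{\si}\c$ up to $\b$, i.e. $h\,{}_{\si}\c\,1_{H}=1_{H}\,{}_{\si}\c\,h=\b(h)$; (ii) that $\b$ is multiplicative, i.e. $\b(h\,{}_{\si}\c\,g)=\b(h)\,{}_{\si}\c\,\b(g)$ and $\b(1_{H})=1_{H}$; and (iii) Hom-associativity $\b(h)\,{}_{\si}\c\,(g\,{}_{\si}\c\,k)=(h\,{}_{\si}\c\,g)\,{}_{\si}\c\,\b(k)$. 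Part~(2) is the mirror image of part~(1), obtained from the same computations with the order of the factors reversed and Eq.~(\ref{e5.6}) replaced by the right $2$-cocycle condition, so I would write only part~(1) in detail.

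For (i): since $\D(1_{H})=1_{H}\o 1_{H}$ and $h_{2}1_{H}=\b(h_{2})$, normality of $\si$ yields $h\,{}_{\si}\c\,1_{H}=\si(h_{1},1_{H})\b(h_{2}1_{H})=\v(h_{1})\b^{2}(h_{2})=\b^{2}(\v(h_{1})h_{2})=\b^{2}(\b^{-1}(h))=\b(h)$, where the counit axiom $\v(h_{1})h_{2}=\b^{-1}(h)$ is used; the identity $1_{H}\,{}_{\si}\c\,h=\b(h)$ is symmetric. For (ii): $\b(1_{H})=1_{H}$ is inherited from $H$, and $\b(h\,{}_{\si}\c\,g)=\si(h_{1},g_{1})\b^{2}(h_{2}g_{2})$, while $\b(h)\,{}_{\si}\c\,\b(g)=\si\big(\b(h)_{1},\b(g)_{1}\big)\b\big(\b(h)_{2}\b(g)_{2}\big)=\si(\b(h_{1}),\b(g_{1}))\b^{2}(h_{2}g_{2})$, using $\D\circ\b=(\b\o\b)\circ\D$ and multiplicativity of $\b$ on $H$; these two agree by Eq.~(\ref{e5.5}).

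The heart of the matter is (iii). I would expand the left-hand side, using the definition of ${}_{\si}\c$ twice, as
\[\b(h)\,{}_{\si}\c\,(g\,{}_{\si}\c\,k)=\si(g_{1},k_{1})\,\si\big(\b(h_{1}),\b((g_{2}k_{2})_{1})\big)\,\b\big(\b(h_{2})\,\b((g_{2}k_{2})_{2})\big),\]
and, symmetrically, the right-hand side as
\[(h\,{}_{\si}\c\,g)\,{}_{\si}\c\,\b(k)=\si(h_{1},g_{1})\,\si\big(\b((h_{2}g_{2})_{1}),\b(k_{1})\big)\,\b\big(\b((h_{2}g_{2})_{2})\,\b(k_{2})\big).\]
Then I would rewrite the Sweedler indices via $\D(h_{2}g_{2})=\D(h_{2})\D(g_{2})$ and Hom-coassociativity, strip the uniform outer $\b$'s off the arguments of each $\si$ by Eq.~(\ref{e5.5}), and use the Hom-associativity $\b(a)(bc)=(ab)\b(c)$ of the original product to reduce the element parts of both sides to a common bracketing of the triple product of the Sweedler components of $h$, $g$, $k$. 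Once the element parts match, the scalar coefficient on the left becomes — up to the $\b$'s absorbed by Eq.~(\ref{e5.5}) — an expression of the form $\si(g_{1},k_{1})\si(h,g_{2}k_{2})$ and the one on the right an expression of the form $\si(h_{1},g_{1})\si(h_{2}g_{2},k)$, and these are equal by the $2$-cocycle identity Eq.~(\ref{e5.6}); hence the two sides coincide. The main obstacle is purely bookkeeping: keeping track of the powers of $\b^{\pm1}$ produced by $\b(a)(bc)=(ab)\b(c)$, by $a1_{H}=\b(a)$, and by the Hom-coassociativity constraint, so that when Eq.~(\ref{e5.6}) is finally applied its two sides carry exactly matching $\b$-adjustments; Eq.~(\ref{e5.5}) is precisely what lets those adjustments cancel. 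Finally, part~(2) follows from the analogous expansion of $\b(h)\c_{\si}(g\c_{\si}k)$ and $(h\c_{\si}g)\c_{\si}\b(k)$, invoking the right $2$-cocycle condition in place of Eq.~(\ref{e5.6}).
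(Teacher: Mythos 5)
Your proposal is correct and follows essentially the same route as the paper: verify the unit and multiplicativity of $\b$ directly, then establish Hom-associativity by expanding both sides, stripping the $\b$'s from the arguments of $\si$ via Eq.~(\ref{e5.5}), rebalancing the Sweedler indices with Hom-coassociativity and the Hom-associativity of the original product, and finally applying the cocycle identity Eq.~(\ref{e5.6}). The only difference is that you sketch the bookkeeping of the $\b^{\pm1}$ powers rather than carrying it out in full, but the steps you identify are exactly those in the paper's computation.
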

\begin{proof}
(1) First of all, for any $h,g,k\in H$, It is easy to check that
\begin{align*}
1~ \!_{\si}\c h=h~ \!_{\si}\c 1=\b(h).
\end{align*}
Next, we have
\begin{align*}
\b(h \c_{\si} g)=\si (h_{1},g_{1})\b^{2}(h_{2}g_{2})=\b(h) \c_{\si} \b(g),
\end{align*}
and
\begin{align*}
\b(h) \c_{\si} (g \c_{\si} k)&=\b(h) \c_{\si} \si (g_{1},k_{1})\b(g_{2}k_{2})
\\&=\si (g_{1},k_{1})\si (\b(h_{1}),\b(g_{21}k_{21}))\b(\b(h_{2})(\b(g_{22}k_{22})))
\\&=\si (g_{1},k_{1})\si (h_{1},g_{21}k_{21})\b^{2}(h_{2})(\b^{2}(g_{22})\b^{2}(k_{22}))
\\&=\si (g_{11},k_{11})\si (h_{1},g_{12}k_{12})(\b (h_{2})\b (g_{2}))\b^{2}(k_{2})
\\&=\si (h_{11},g_{11})\si (h_{12}g_{12},k_{1})(\b (h_{2})\b (g_{2}))\b^{2}(k_{2}) \quad by~(\ref{e5.6})
\\&=\si (h_{1},g_{1})\si (h_{21}g_{21},k_{1})(\b^{2}(h_{22})\b^{2}(g_{22}))\b^{2}(k_{2})
\\&=\si (h_{1},g_{1})\si (\b(h_{21}g_{21}),\b(k_{1}))\b(\b(h_{22}g_{22})\b(k_{2}))
\\&=\si (h_{1},g_{1})\b(h_{2}g_{2})\c_{\si} \b(k)
\\&=(h \c_{\si} g) \c_{\si} \b(k).
\end{align*}
$(2)$ Similar to the proof of the part $(1)$.

This completes the proof.
\end{proof}
\begin{proposition}\label{P5.8}
$(1)$ Let $\si$ is a normal left monoidal Hom-$2$-cocycle on $(H,\b)$. Then the Hom-comultiplication $\D$ of $H$ is an algebra homomorphism from $\!_{\si}H$ to $\!_{\si}H\o H$. Therefore it makes $\!_{\si}H$ into a right $H$-Hom-comodule algebra.
\\$(2)$ If $\si$ is a normal right monoidal Hom-$2$-cocycle on $(H,\b)$. Then the Hom-comultiplication $\D$ of $H$ is an algebra homomorphism from $H_{\si}$ to $H\o H_{\si}$. Therefore it makes $H_{\si}$ into a left $H$-Hom-comodule algebra.
\end{proposition}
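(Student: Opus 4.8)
The plan is to verify directly that the Hom-comultiplication $\D$ is multiplicative as a map $\!_{\si}H\to\!_{\si}H\o H$ (unitality and compatibility with the structure map being automatic from the Hom-bialgebra axioms), and then to read off the asserted Hom-comodule algebra structure; part $(2)$ is the left--right mirror of part $(1)$. Recall first that $\!_{\si}H\o H$ carries the tensor-product monoidal Hom-algebra structure, with multiplication $(h\o x)(g\o y)=(h~\!_{\si}\c g)\o xy$ and structure map $\b\o\b$; this is a monoidal Hom-algebra precisely because, by the preceding proposition, $\b$ is an algebra automorphism of $\!_{\si}H$. Since $\D(1_H)=1_H\o 1_H$ and $\D\circ\b=(\b\o\b)\circ\D$, the only point requiring work is the identity $\D(h~\!_{\si}\c g)=\D(h)\,\D(g)$, the right side being computed in $\!_{\si}H\o H$.

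For the left-hand side I would expand $h~\!_{\si}\c g=\si(h_{1},g_{1})\b(h_{2}g_{2})$, apply $\D$, and use that $\b$ and $\D$ are algebra maps together with $\D\circ\b=(\b\o\b)\circ\D$ to obtain $\D(h~\!_{\si}\c g)=\si(h_{1},g_{1})\,\b(h_{21})\b(g_{21})\o\b(h_{22})\b(g_{22})$. For the right-hand side, $\D(h)\D(g)=(h_{1}\o h_{2})(g_{1}\o g_{2})=(h_{1}~\!_{\si}\c g_{1})\o h_{2}g_{2}=\si(h_{11},g_{11})\,\b(h_{12})\b(g_{12})\o h_{2}g_{2}$. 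To identify the two expressions I would invoke Hom-coassociativity of $\D$ in the form $h_{1}\o\b(h_{21})\o\b(h_{22})=\b(h_{11})\o\b(h_{12})\o h_{2}$, together with the analogous identity for $g$, which transports the arguments of $\si$ from the ``second-leg'' slots back to the ``first-leg'' slots, and then remove the resulting $\b$'s inside $\si$ using $(\ref{e5.5})$. It is worth observing that the $2$-cocycle equation $(\ref{e5.6})$ plays no role here: it is exactly what makes $~\!_{\si}\c$ associative in the preceding proposition, but it is not needed for $\D$ to be multiplicative.

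Finally, for the ``Therefore'' clause I would set $\r:=\D:\!_{\si}H\to\!_{\si}H\o H$. The Hom-coassociativity of $\D$, its compatibility with $\b$, and the counit axiom $h_{1}\v(h_{2})=\b^{-1}(h)$ are precisely the axioms making $(\!_{\si}H,\b)$ a right $(H,\b)$-Hom-comodule, while the multiplicativity established above, together with $\r(1_H)=1_H\o 1_H$, is precisely the right $(H,\b)$-Hom-comodule algebra condition $\r(h~\!_{\si}\c g)=\r(h)\r(g)$. Part $(2)$ runs verbatim with $H_{\si}$ in place of $\!_{\si}H$, product $h\c_{\si}g=\b(h_{1}g_{1})\si(h_{2},g_{2})$, target $H\o H_{\si}$, the mirrored coassociativity identity $\b(h_{11})\o\b(h_{12})\o h_{2}=h_{1}\o\b(h_{21})\o\b(h_{22})$ inserted on the appropriate side, and again $(\ref{e5.5})$. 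The only delicate aspect of the whole argument is keeping track of the structure maps $\b$ in the coassociativity substitution; beyond this bookkeeping no genuine obstacle arises.
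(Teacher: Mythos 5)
Your proof is correct and is exactly the direct verification the paper leaves as ``Straightforward'': expand both sides of $\D(h~\!_{\si}\c g)=\D(h)\D(g)$, shift the arguments of $\si$ using Hom-coassociativity in the form $h_{1}\o\b(h_{21})\o\b(h_{22})=\b(h_{11})\o\b(h_{12})\o h_{2}$, and cancel the $\b$'s via (\ref{e5.5}); the observation that (\ref{e5.6}) is not needed here is also accurate. Nothing further is required.
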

\begin{proof}
Straightforward.
\end{proof}
\begin{definition}
The Heisenberg double of a monoidal Hom-Hopf algebra $(H,\b)$, denoted by $\mathcal{H}(H)$, is the right Hom-smash product algebra $H\# H^{*}$ with respect to the right regular action of $H$ on $H^{*}$, i.e., for any $h,g\in H$ and $p,q\in H^{*}$,
\begin{align}\label{e5.9}
(h\# p)(g\# q)=h\b(g_{1})\#(p\circ \b\lh g_{2})\bu q,
\end{align}
where we denote by $H^{*}$ the right $H$-Hom-module algebra $H^{*r}_{-n-1}$ $($ notation as in Example $\ref{E2.2}$ $(2))$, whose $H$-actions $\lh :H^{*}\o H \lr H^{*}, (p\lh h)(g)=p (\b^{-n-1}(h)\b^{-2}(g))$, for any $h,g\in H$ and $p\in H^{*}$.
\end{definition}
Similarly, the Heisenberg double $\mathcal{H}(H^{*})$ of $H^{*}$ is the following left Hom-smash product algebra $H^{*}\#H$:
\begin{align}\label{e5.10}
(u\# h)(v\# k)=u\bu(h_{1}\rh v\circ \b)\# \b(h_{2})k,
\end{align}
where we denote by $H^{*}$ the left $H$-Hom-module algebra $H^{*l}_{-n-1}$ $($ notation as in Example $\ref{E1.1})$, whose $H$-actions $\rh:H\o H^{*}\lr H^{*}, (h\rh u)(k)=u(\b^{-2}(k)\b^{-n-1}(h))$, for any $h,k\in H$ and $u\in H^{*}$.
In what follows, we will give the main result of this section.
\begin{theorem}
Let $(H,\b)$ be a monoidal Hom-Hopf algebra. Then
\\$(1)$ The Heisenberg double $\mathcal{H}(H)$ of $H$ is the left twist of the Drinfel'd codouble $T(H)$ by the left monoidal Hom-$2$-cocycle $\si$ on $T(H)$ given by Eq. $(\ref{e5.7})$.
\\$(2)$ The Heisenberg double $\mathcal{H}(H^{*})$ of $H^{*}$ is the right twist of the Drinfel'd codouble $\widehat{T(H)}$ by the right monoidal Hom-$2$-cocycle $\si$ on $\widehat{T(H)}$ given by Eq. $(\ref{e5.8})$.
\end{theorem}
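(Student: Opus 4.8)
The plan is to prove both statements by directly comparing multiplication formulas: the left (resp. right) twist of the Drinfel'd codouble is built from the same underlying vector space $H^{op}\o H^{*}$ (resp. $H^{*}\o H^{op}$) as the Heisenberg double $\mathcal H(H)$ (resp. $\mathcal H(H^{*})$), so the whole content is an identity between two bilinear maps. For part $(1)$, I would start from the definition $h~\!_{\si}\c g=\si(h_{1},g_{1})\b(h_{2}g_{2})$ applied to $H=T(H)$, substitute the Drinfel'd codouble comultiplication from Example 5.? (the ten-line formula $\D(h\o p)=h_{1}\o e^{t}\bu(\b^{*2}(p_{1})\bu e^{s})\o \b^{n+1}S^{-1}(e_{s})(\b^{-2}(h_{2})\b^{n-1}(e_{t}))\o p_{2}$), plug it together with the multiplication $(h\o p)(g\o q)=gh\o pq$ of $T(H)$ and the $2$-cocycle $\si(h\o p,g\o q)=\<p,\b^{-n}(g)\>q(1_{H})\v(h)$ from Eq.~(\ref{e5.7}), and then simplify using the dual-basis identities $\<e^{s},x\>e_{s}=x$, $\sum_s e^{s}\o e_{s}$ being a co/evaluation, plus the counit and Hom-coassociativity axioms to collapse all the summation indices.

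The key intermediate step is the index bookkeeping: after inserting $\si$ the factor $\v(h_{1})$ (from the $\v$ in Eq.~(\ref{e5.7}) evaluated on the first tensorand $h_1$) kills one leg, the pairing $\<e^{t}\bu(\b^{*2}(u_1)\bu e^{s}),\b^{-n}(g)\>$ together with $q(1_H)$-type factors contracts the $e_s,e_t$ summations against $g$, and what survives should be exactly $h\b(g_{1})\o (p\circ\b\lh g_{2})\bu q$ in the notation of Eq.~(\ref{e5.9}) — up to translating $\lh$ via its defining formula $(p\lh h)(g)=p(\b^{-n-1}(h)\b^{-2}(g))$ and matching the shift $j=-n-1$ used in $H^{*r}_{-n-1}$. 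Here I would carefully track how the $\b^{n+1}S^{-1}(e_s)$ and $\b^{n-1}(e_t)$ factors, once paired, produce precisely the $\b^{-n-1}$ and $\b^{-2}$ twists appearing in $\lh$; the antipode $S^{-1}$ enters because the comultiplication of $T(H)$ was built from $H^{op}$, and it should disappear against the pairing exactly as in the Corollary 4.6 computation. Part $(2)$ is dual: apply $h\c_{\si}g=\b(h_{1}g_{1})\si(h_{2},g_{2})$ to $\widehat{T(H)}$ with the comultiplication from Corollary 4.8's display and the $2$-cocycle Eq.~(\ref{e5.8}), and compare against Eq.~(\ref{e5.10}) using $(h\rh u)(k)=u(\b^{-2}(k)\b^{-n-1}(h))$.

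The main obstacle will be the purely combinatorial simplification: there is no conceptual difficulty once the two formulas are written down, but the Drinfel'd codouble comultiplication has four tensor legs and several powers of $\b$ and $\b^{*}$, the twist doubles the comultiplication, and one must repeatedly invoke Hom-coassociativity (in the subtle form $\ld^{-1}(c_1)\o\D(c_2)=\D(c_1)\o\ld^{-1}(c_2)$) and the comodule-algebra axioms from Proposition~\ref{P5.8} to reassociate before the dual-basis contractions become legal. I would organize the computation so that I first use Proposition~\ref{P5.8} to recognize that $\D\colon\!_{\si}T(H)\to\!_{\si}T(H)\o T(H)$ is an algebra map (which guarantees the twist is well-defined and lets me reduce to evaluating $\si$ on the appropriate legs), and only then expand. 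A clean way to finish is to test both sides against an arbitrary $1_H\o\<\cdot,x\>$ in the $H^{*}$-slot and an arbitrary element in the $H$-slot, reducing the identity of maps to a scalar identity in $H^{\o}$ that follows from the Hopf axioms of $(H,\b)$ — the same device used throughout Proposition~\ref{P5.2}.
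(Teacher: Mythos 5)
Your proposal is correct and takes essentially the same route as the paper: the paper's proof is exactly the direct expansion of $(h\o p)~\!_{\si}\c (g\o q)$ (resp.\ $(u\# h)\c_{\si}(v\# k)$) using the explicit comultiplication of $T(H)$ (resp.\ $\widehat{T(H)}$), the cocycle, the factor $\v(h_{1})$ killing one leg, and dual-basis contraction, ending in $\<p_{1},\b^{-n}(g_{2})\>(h\b(g_{1}))\o \b^{*-1}(p_{2})\bu q$, which is then matched against Eq.~(\ref{e5.9}) (resp.\ Eq.~(\ref{e5.10})). The auxiliary devices you mention (first invoking Proposition~\ref{P5.8}, or testing against $\<\cdot,x\>$) are not used in the paper's version, but the core computation is identical.
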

\begin{proof}
$(1)$ We need to prove that $\!_{\si}T(H)$ and $\mathcal{H}(H)$ share the same Hom-multiplication. In fact, for any $h,g\in H$ and $p,q\in H^{*}$, we have
\begin{align*}
(h\o p&) ~\!_{\si}\c(g\o q)=\si ((h\o p)_{1},(g\o q)_{1})(\b \o \b^{*-1} )[(h\o p)_{2}(g\o q)_{2}]
\\=&\si (h_{1}\o e^{t}\bu(\b^{*2}(p_{1})\bu e^{s}),g_{1}\o e^{j}\bu(\b^{*2}(q_{1})\bu e^{i}))(\b \o \b^{*-1} )
\\\quad &[(\b^{n+1}S^{-1}(e_{s})(\b^{-2}(h_{2})\b^{n-1}(e_{t}))\o p_{2})(\b^{n+1}S^{-1}(e_{i})(\b^{-2}(g_{2})\b^{n-1}(e_{j}))\o q_{2})]
\\=&\<e^{t}\bu(\b^{*2}(p_{1})\bu e^{s}),\b^{-n}(g_{1})\>\v(h_{1})(e^{j}\bu(\b^{*2}(q_{1})\bu e^{i}))(1_{H})
\\\quad &[\b^{n+2}S^{-1}(e_{i})(\b^{-1}(g_{2})\b^{n}(e_{j}))][\b^{ n+2}S^{-1}(e_{s})(\b^{-1}(h_{2})\b^{n}(e_{t}))]\o \b^{*-1}(p_{2}) \b^{*-1} (q_{2})
\\=&\<p_{1},\b^{-n+2}(g_{121})\>\b (g_{2})[\b^{2}S^{-1}(g_{122})(\b^{-2}(h)g_{11})]\o \b^{*-1} (p_{2}) q
\\=&\<p_{1},\b^{-n+1}(g_{21})\>\b^{3}(g_{222})[\b^{2}S^{-1}(g_{221})(\b^{-2}(h)\b^{-1}(g_{1}))]\o \b^{*-1} (p_{2}) q
\\=&\<p_{1},\b^{-n+1}(g_{21})\>(\b^{2}(g_{222})\b^{2}S^{-1}(g_{221}))(\b^{-1}(h)g_{1})\o \b^{*-1} (p_{2}) q
\\=&\<p_{1},\b^{-n}(g_{2})\>(h\b (g_{1}))\o \b^{*-1}(p_{2}) q.
\end{align*}
Comparing with Eq. $(\ref{e5.9})$, we easily find that this is the Hom-multiplication in $\mathcal{H}(H)$.

$(2)$ For any $h,k\in H$ and $u,v\in H^{*}$, we get
\begin{align*}
(u\# &h)\c _{\si}(v\# k)=(\b^{*-1}\o \b)((u\# h)_{1}(v\# k)_{1})\si ((u\# h)_{2},(v\# k)_{2})
\\&=((\b^{*-1}\o \b)((u_{1}\o (\b^{n-1}(e_{t })\b^{-2}(h_{1}))\b^{n+1}S^{-1}(e_{s}))(v_{1}\o (\b^{n-1}(e_{j })\b^{-2}(k_{1}))
\\&\quad \b^{n+1}S^{-1}(e_{i}))) \si ((e^{s}\bu \b^{*2}(u_{2}))\bu e^{t}\o h_{2},(e^{i}\bu (\b^{2})^{*}(v_{2}))\bu e^{j}\o k_{2})
\\&=\b^{*-1}(u_{1})\bu \b^{*-1}(v_{1})\o [(\b^{n }(e_{j })\b^{-1}(k_{1}))\b^{n+2}S^{-1}(e_{i})][(\b^{n }(e_{t })\b^{-1}(h_{1}))\b^{n+2}S^{-1}(e_{s})]
\\&\quad  \<(e^{i}\bu (\b^{*2}(v_{2}))\bu e^{j},\b^{-n}(h_{2})\>((e^{s}\bu \b^{*2}(u_{2}))\bu e^{t})(1_{H})\v(k_{2})
\\&=\<v_{2},\b^{-n+2}(h_{212})\>u\bu v_{1}\circ \b^{-1}\o [(h_{22}\b^{-2}(k))\b^{2}S^{-1}(h_{211})]\b (h_{1})
\\&=\<v_{2},\b^{-n+1}(h_{21 })\>u\bu v_{1}\circ \b^{-1}\o [(h_{22}\b^{-2}(k))\b S^{-1}(h_{12})]\b^{2} (h_{11})
\\&=\<v_{2},\b^{-n+1}(h_{21 })\>u\bu v_{1}\circ \b^{-1}\o (\b(h_{22})\b^{-1}(k))(\b S^{-1}(h_{12})\b  (h_{11}))
\\&=\<v_{2},\b^{-n }(h_{1 })\>u\bu v_{1}\circ \b^{-1}\o \b(h_{2 })k.
\end{align*}
Comparing with Eq. $(\ref{e5.10})$, we easily see that this is the Hom-multiplication in $\mathcal{H}(H^{*})$.
This completes the proof.
\end{proof}
The next corollary follows from Proposition \ref{P5.8}.
\begin{corollary}
$(1)$ The Hom-comultiplication of $T(H)$, considered as a map from $\mathcal{H}(H)$ to $\mathcal{H}(H)\o T(H)$, makes $\mathcal{H}(H)$ into a right $T(H)$-Hom-comodule algebra.
\\$(2)$ The Hom-comultiplication of $\widehat{T(H)}$, considered as a map from $\mathcal{H}(H^{*})$ to $\widehat{T(H)}\o \mathcal{H}(H^{*})$, makes $\mathcal{H}(H^{*})$ into a left $\widehat{T(H)}$-Hom-comodule algebra.
\end{corollary}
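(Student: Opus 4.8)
The plan is to obtain both parts as immediate consequences of Proposition~\ref{P5.8}, once one records that $\si$ is normal in each case. Indeed, by the theorem above the Heisenberg double $\mathcal{H}(H)$ is, as a monoidal Hom-algebra, nothing but the left twist $\!_{\si}T(H)$ of the monoidal Hom-Hopf algebra $T(H)=(H^{op}\bo H^{*},\b\o\b^{*-1})$ (a monoidal Hom-Hopf algebra by Corollary~\ref{C4.5}) by the left monoidal Hom-$2$-cocycle $\si$ of Eq.~$(\ref{e5.7})$; similarly $\mathcal{H}(H^{*})$ is the right twist $\widehat{T(H)}_{\si}$ of $\widehat{T(H)}=(H^{*}\bo H^{op},\b^{*-1}\o\b)$ (see Corollary~\ref{C4.7}) by the right monoidal Hom-$2$-cocycle $\si$ of Eq.~$(\ref{e5.8})$. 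Since the underlying vector space and the Hom-comultiplication of $T(H)$ are untouched by the twist, the map called ``the Hom-comultiplication of $T(H)$'' in the statement is literally $\D_{T(H)}$, and likewise for $\widehat{T(H)}$.

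For $(1)$, first I would check that the cocycle $\si$ of $(\ref{e5.7})$ is \emph{normal} for $T(H)$, i.e. $\si(1_{T(H)},x)=\si(x,1_{T(H)})=\v_{T(H)}(x)$ for all $x\in T(H)$. This is a one-line computation: with $1_{T(H)}=1_{H}\o\v_{H}$ and $\v_{T(H)}(g\o q)=\v(g)q(1_{H})$, formula $(\ref{e5.7})$ gives $\si(1_{H}\o\v_{H},\,g\o q)=\<\v_{H},\b^{-n}(g)\>q(1_{H})\v(1_{H})=\v(g)q(1_{H})$ and $\si(h\o p,\,1_{H}\o\v_{H})=\<p,\b^{-n}(1_{H})\>\v_{H}(1_{H})\v(h)=p(1_{H})\v(h)$. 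Then I would apply Proposition~\ref{P5.8}$(1)$ with the monoidal Hom-Hopf algebra there taken to be $T(H)$ and the cocycle taken to be this $\si$: it asserts that $\D_{T(H)}$ is an algebra homomorphism from $\!_{\si}T(H)$ to $\!_{\si}T(H)\o T(H)$, hence makes $\!_{\si}T(H)$ a right $T(H)$-Hom-comodule algebra. Reading this through the identification $\!_{\si}T(H)=\mathcal{H}(H)$ yields exactly the claim that $\D_{T(H)}\colon\mathcal{H}(H)\lr\mathcal{H}(H)\o T(H)$ makes $\mathcal{H}(H)$ a right $T(H)$-Hom-comodule algebra.

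Part $(2)$ is completely parallel: the same short verification, using $1_{\widehat{T(H)}}=\v_{H}\o 1_{H}$ and $\v_{\widehat{T(H)}}(v\o k)=v(1_{H})\v(k)$, shows the cocycle $(\ref{e5.8})$ is normal, and Proposition~\ref{P5.8}$(2)$, applied with $\widehat{T(H)}$ in place of $H$, gives that $\D_{\widehat{T(H)}}$ is an algebra homomorphism from $\widehat{T(H)}_{\si}$ to $\widehat{T(H)}\o\widehat{T(H)}_{\si}$, i.e. makes $\mathcal{H}(H^{*})=\widehat{T(H)}_{\si}$ a left $\widehat{T(H)}$-Hom-comodule algebra. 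The only point that genuinely needs care is the normality check in each of the two cases; everything else is a transport of Proposition~\ref{P5.8} along the twist identification provided by the previous theorem, so no real obstacle arises — the mild subtlety is just to keep straight which tensor factor is twisted and on which side the comodule structure lands.
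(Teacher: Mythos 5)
Your proposal is correct and follows the same route as the paper, which simply derives the corollary from Proposition \ref{P5.8} via the identification of the Heisenberg doubles with the twists of $T(H)$ and $\widehat{T(H)}$. Your explicit verification that the cocycles $(\ref{e5.7})$ and $(\ref{e5.8})$ are normal is a detail the paper leaves implicit, and it is a worthwhile addition.
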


\end{document}